\pgfplotsset{width=10cm,compat=1.9}
\newcommand{\DD}{\mathbb{D}}
\newcommand{\HH}{\mathbb{H}}
\newcommand{\NN}{\mathbb{N}}
\newcommand{\RR}{\mathbb{R}}
\newcommand{\ZZ}{\mathbb{Z}}
\newcommand{\cB}{\mathcal{B}}
\newcommand{\cC}{\mathcal{C}}
\newcommand{\cF}{\mathcal{F}}
\newcommand{\sU}{\mathscr{U}}
\newcommand{\Homeo}{\operatorname{Homeo}}
\newcommand{\Homeop}{\operatorname{Homeo}_+}
\newcommand{\Isomp}{\operatorname{Isom_+}}
\newcommand{\Fix}{\operatorname{Fix}}
\newcommand{\PSL}{\operatorname{PSL}_2}
\newcommand{\Mod}{\operatorname{Mod}}
\newcommand{\closure}[1]{\overline{#1}}
\newcommand{\vol}{\mathrm{vol}}
\newcommand{\supp}{\operatorname{supp}}
\title{Matsumoto dichotomy on foliated $S^1$-bundles}
\author[Kim]{KyeongRo Kim}
\address{\hskip-\parindent
Research institute of Mathematics\\
Seoul National University\\
GwanAkRo 1, Gwanak- Gu, Seoul 08826, Korea}
\email{kyeongrokim14@gmail.com}
\author[Lee]{Hongjun Lee}
\address{\hskip-\parindent
Department of Mathematical Sciences\\
KAIST\\
291 Daehak-Ro Yuseong-Gu, Daejeon, 34141, South Korea}
\email{hj\_drhouse@kaist.ac.kr}
\date{\today}
\begin{document}

\begin{abstract} 
Given an ergodic harmonic measure on a foliated circle bundle over a closed hyperbolic manifold, Matsumoto constructed a map from the fiber circle to the space of nonempty closed subsets of the boundary sphere of the universal cover of the base manifold. This map is well-defined at almost every point.
Also, the map is equivariant under two actions of the fundamental group of the base manifold: the holonomy action on the fiber and the action on the space of closed subsets induced by the boundary sphere action.

Matsumoto established a dichotomy for these maps, which corresponds to a dichotomy of ergodic harmonic measures. (Indeed, the Matsumoto dichotomy also concerns ergodic harmonic measures on compact hyperbolic laminations.) The dichotomy says that a Matsumoto map either maps each point to a singleton (Type I) or to the entire sphere (Type II).

In this paper, we study actions of closed hyperbolic manifold groups on the circle in the context of the Matsumoto dichotomy. We essentially show that the suspension of any action with a non-discrete image cannot admit a Matsumoto map of type I under the condition of a uniformly bounded number of fixed points. As a consequence, we address a question posed in Matsumoto's paper.
\end{abstract}

\maketitle

\section{Introduction}\label{Sec:intro}
Let $S$ be a closed hyperbolic surface. Say that $\Gamma$ is a discrete subgroup of $\Isomp(\HH^2)$ such that $S=\HH^2/\Gamma$ where $\HH^2$ is the hyperbolic plane and $\Isomp(\HH^2)$ is the group of orientation preserving isometries of $\HH^2$. Then, $\Gamma$ acts continuously on the circle at infinity $S_\infty^1$, that is, there is an embedding $\iota:\Gamma \to \Homeop(S^1)$.
When $\varphi$ is a pseudo-Anosov mapping class in $\Mod(S)$, any lifting of each representative of $\varphi$ can be continuously extended to the circle at infinity. Say that an element $\phi$ in $\Homeop(S^1)$ is obtained by restricting such an extension.
The subgroup of $\Homeop(S^1)$, generated by $\iota(\Gamma)$ and $\phi$, is isomorphic to the fundamental group of the mapping torus $M_\varphi$ of $\varphi$. 
Hence, we can see that  $\pi_1(M_\varphi)$ acts faithfully on the circle. 
We say that  $\rho_{univ}:\pi_1(M_\varphi)\to \Homeop(S^1)$ is the action.

On the other hands, by Thurston's hyperbolization theorem for a mapping torus, \cite[Theorem~0.1]{Thurston86}, $M_\varphi$ admits a complete hyperbolic metric. Hence, the universal cover of $M_\varphi$ is the hyperbolic space $\HH^3$, and the fundamental group $\pi_1(M_\varphi)$ acts on the sphere at infinity $S_\infty^2$.
Say that $\rho_\infty: \pi_1(M_\varphi)\to \Homeop(S^2)$ is the boundary action. 
Cannon and Thurston \cite{CannonThurston07} found an interesting connection between the actions $\rho_{univ}$ and $\rho_\infty$.
They showed that there is a contiuous surjection $m_{CT}:S^1\to S^2$ (so-called a Cannon-Thurston map) that is $(\rho_{univ},\rho_{\infty})$-equivariant, namely, $m_{CT} \circ \rho_{univ}(g)=\rho_{\infty}(g)\circ m_{CT}$ for all $g\in \pi_1(M_\varphi)$.
See \cite{CannonThurston07} for the more detailed exposition about $m_{CT}$.   

More generally, Thurston \cite{ThurstonSlitheringI} and Calegari-Dunfield \cite{CalegariDunfield}  showed that the fundamental group of an atoroidal 3-manifold $M$ with a taut foliation $\cF$ acts faithfully on a `universal' circle.
Such an action is called a \emph{universal circle action} $\rho_{univ}$, which encodes the topology of $\cF$. 
Here, $\rho_{univ}$ coincides with the previous $\rho_{univ}$ when $\cF$ is a fibration.
Thurston's construction of a universal circle action has been generalized under the various topological structures, e.g. pseudo-Anosov flows, quasi-geodesic flows, essential laminations, veering triangulations and so on.
For instance, see \cite{Calegari07}, \cite{Fenley16}, \cite{Frankel13} and \cite{FrankelSchleimerSegerman}.  

In fact, a universal circle action is not uniquely determined by $\cF$. Also, a universal circle action is not the only way that $\pi_1(M)$ acts on the circle.
For some class of hyperbolic three manifolds $M$, we can construct a faithful action $\pi_1(M)\to \PSL(\RR)$.
Recall that $\PSL(\RR)$ is same with $\Isomp(\HH^2)$ and the action of $\PSL(\RR)$ on the circle at infinity induces a natural embedding $\PSL(\RR)\to \Homeop(S^1)$. 
See \refexa{GaloisConj} for the detailed construction.

Motivated by the result of Cannon and Thurston, we discuss whether there is a connection between  $\rho_\infty$ and any given action $\pi_1(M)\to \Homeop(S^1)$, which is not necessarily faithful.
More generally, let $\Gamma$ be a discrete subgroup of $\Isomp(\HH^n)$ such that $M=\HH^n/G$ is a closed manifold, and $\rho:\Gamma \to \Homeop(S^1)$ a homomorphism. Say that $\rho_\infty:\Gamma\to \Homeop(S^{n-1})$ is the  action of $\Gamma$ on the sphere at infinity $S_\infty^{n-1}$.
In this paper, we discuss whether $\rho$  is  related to $\rho_\infty$ in any interesting way.

To this end, we study  \emph{Matsumoto maps}, introduced in \cite{Matsumoto12}. A Matsumoto map $\frak{m}$ (under our setting) is a $(\rho,\rho_\infty)$-equivariant measurable map from $S^1$ to the space of nonempty closed subsets
of $S_\infty^{n-1}$. 
It is given by some ergodic harmonic measure on the \emph{suspension} of $\rho$. 
Matsumoto showed that either $\frak{m}(x)$ is a singleton for almost every $x$ in $S^1$ (Type I) or $\frak{m}(x)=S_\infty^{n-1}$ for almost every $x$ in $S^1$ (Type II), which we call the \emph{Matsumoto dichotomy}. 
See \refsec{MatsumotoDichotomy} for the precise statement of the Matsumoto dichotomy.

In the Type I case, we can obtain an `almost' equivariant map from $S^1$ to $S_\infty^{n-1}$, considering a singleton as a point, similar to a Cannon-Thurston map.
Recently, Adachi, Matsuda and Nozawa \cite{AdachiMatsudaNozawa} constructed a Matsumoto map (of Type I)  for a circle action of a punctured hyperbolic surface group with the maximal (relative) Euler number.  
This provided an alternative proof of the rigidity of such a circle action of and Burger–Iozzi–Wienhard \cite{BurgerIozziWienhard}. 

In this paper, we show that no non-discrete action can admit a  Matsumoto map of Type I under the condition of at most $N$ fixed points.
Obviously, the above example of a hyperbolic mapping torus (\refexa{fibering}) and any $\PSL(\RR)$-representation satisfy this condition.

\begin{thm}\label{Thm:indiscreteType2}
Let $\Gamma$ be a subgroup of  $\Isomp(\HH^n)$ such that $\HH^n/\Gamma$ is a closed manifold, and $\rho:\Gamma\to \Homeop(S^1)$ an action of $\Gamma$ on $S^1$.
Assume that $\rho$ satisfies one of the following conditions:
\begin{itemize}
    \item $\rho$ is non-faithful or
    \item $\rho$ is a faithful action satisfying the following properties:
\begin{itemize}
    \item $\rho(\Gamma)$ is non-discrete in $\Homeop(S^1)$ and
    \item there is a number $N\in \NN$ such that $\rho(g)$ has at most $N$ fixed points for all $g\in \Gamma\setminus \{id\}$.
\end{itemize}
\end{itemize}
Then, any ergodic harmonic measure of the suspension foliation of $\rho$ is of Type~II.
\end{thm}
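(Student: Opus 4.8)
The plan is to argue by contradiction: suppose some ergodic harmonic measure $m$ of the suspension is of Type~I. By the description of the Matsumoto map in \refsec{MatsumotoDichotomy}, Type~I produces a measurable $(\rho,\rho_\infty)$-equivariant map $\phi\colon S^1\to S^{n-1}_\infty$, defined $\lambda$-a.e.\ on the fibre, where $\lambda$ is the fibre marginal attached to $m$; moreover $\phi_*\lambda$ lies in the measure class of the leafwise harmonic (hitting) measure on $S^{n-1}_\infty$, i.e.\ the Lebesgue class, so it is non-atomic (here $n\ge 2$). Since $\HH^n/\Gamma$ is a closed manifold, $\Gamma$ is torsion-free and cocompact, so every $g\in\Gamma\setminus\{\mathrm{id}\}$ is loxodromic and $\rho_\infty(g)$ has a two-point fixed set on $S^{n-1}_\infty$. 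If $\rho$ is non-faithful, pick $g\ne\mathrm{id}$ in $\ker\rho$: equivariance gives $\phi(t)=\phi(\rho(g)t)=\rho_\infty(g)\phi(t)$ for a.e.\ $t$, so $\phi$ takes values a.e.\ in the two-point set $\mathrm{Fix}(\rho_\infty(g))$, contradicting non-atomicity of $\phi_*\lambda$. This settles the first alternative, so from now on $\rho$ is faithful.

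Next I would normalise the topological type of $\rho(\Gamma)\curvearrowright S^1$ by analysing $\mathrm{supp}(\lambda)$. If $\rho$ preserves a probability measure and acts minimally it is conjugate to a rotation group, hence has abelian image, contradicting faithfulness. If $\lambda$ is atomic it is, by ergodicity, uniform on a finite orbit, and then the equivariant family of fibre measures is a $\Gamma$-equivariant map from the ergodic space $(S^{n-1}_\infty,\mathrm{Leb})$ into a finite-dimensional simplex, hence essentially constant; unwinding the construction of $\mathfrak m$ forces the disintegration measures to be Lebesgue, i.e.\ $m$ is of Type~II — excluded. If $\mathrm{supp}(\lambda)$ is an exceptional minimal set, collapsing its complementary intervals yields a minimal action on a circle that is still faithful — this is precisely where the bound of $N$ fixed points is used, since an element of the new kernel would fix the whole exceptional set and hence infinitely many points — and $m$ together with $\phi$ descend, so we may pass to this action. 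We are then in the case $\rho$ faithful, minimal, with no invariant probability measure; by the proximality dichotomy for harmonic/stationary measures (Antonov; Deroin--Kleptsyn--Navas), $\lambda$ is the unique stationary measure, non-atomic, the action is proximal, and the boundary data of $m$ is Dirac-valued: there is a measurable $(\rho_\infty,\rho)$-equivariant $s\colon S^{n-1}_\infty\to S^1$, and Type~I is equivalent to $s$ being essentially injective, so $\phi=s^{-1}$ is an equivariant a.e.-isomorphism $(S^1,\lambda)\to(S^{n-1}_\infty,\mathrm{Leb})$.

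Finally I would use non-discreteness. Choose $g_m\in\Gamma\setminus\{\mathrm{id}\}$ with $\rho(g_m)\to\mathrm{id}$ in $\Homeop(S^1)$; faithfulness makes the $g_m$ eventually distinct, so $g_m\to\infty$ in $\Gamma$ and, by the convergence property of the boundary action, after a subsequence $\rho_\infty(g_m)$ has source--sink dynamics towards a pair $a\ne b$ in $S^{n-1}_\infty$. For a.e.\ $t$ one has $\phi(\rho(g_m)t)=\rho_\infty(g_m)\phi(t)\to a$ (since $\phi(t)\ne b$ a.e.), so by dominated convergence $\phi_*\!\big((\rho(g_m))_*\lambda\big)\to\delta_a$ weakly; on the other hand $\rho(g_m)\to\mathrm{id}$ uniformly gives $(\rho(g_m))_*\lambda\to\lambda$ weakly on $S^1$. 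If $\phi$ is, off a $\lambda$-null set, continuous at $\lambda$-a.e.\ point, the portmanteau theorem combines these into $\phi_*\lambda=\delta_a$, contradicting non-atomicity and finishing the proof. The regularity of $\phi$ is the remaining — and, I expect, main — obstacle: here the bound on fixed points re-enters, via the fact that a non-discrete group of circle homeomorphisms whose nontrivial elements have at most $N$ fixed points is, after conjugation, $\mathrm{PSL}_2^{(k)}(\RR)$-like, which (for $n=2$ through Ghys' classification of circle actions by the bounded Euler class and the resulting semiconjugacy onto $\rho_\infty$, and in general through the structure of such groups) lets one take $\phi$, equivalently $s$, monotone and hence $\lambda$-a.e.\ continuous. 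So the hard core is upgrading the measurable equivariant map to an honest monotone semiconjugacy under the non-discreteness plus bounded-fixed-points hypotheses, so that the dominated-convergence/portmanteau transport is legitimate; verifying that the exceptional-minimal-set collapse and the proximality dichotomy apply verbatim to leafwise Brownian motion on a foliated bundle over a closed hyperbolic manifold are the secondary technical points.
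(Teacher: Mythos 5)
The decisive gap is the one you flag yourself, and it sits at the heart of the main case. Your endgame (portmanteau applied to $(\rho(g_m))_*\lambda\to\lambda$ versus $(\rho_\infty(g_m))_*\phi_*\lambda\to\delta_a$) is only legitimate if $\phi$ is continuous at $\lambda$-a.e.\ point (or monotone), and you do not establish this. The route you propose for it — Ghys' bounded Euler class producing a monotone semiconjugacy onto $\rho_\infty$ — does not even make sense for $n\ge 3$, where $\rho_\infty$ acts on the sphere $S^{n-1}_\infty$ rather than on a circle, and for $n=2$ nothing in the hypotheses gives maximality of the Euler class (the motivating examples, e.g.\ dense images in $\PSL^{(k)}(\RR)$ or Galois conjugates as in \refexa{GaloisConj}, are precisely non-maximal). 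The paper closes this gap with far less machinery and in the opposite direction: measurability of the Matsumoto map (\refcor{measurability}) plus Fremlin's generalized Lusin theorem (\refthm{continuity}) supplies almost-continuity for free, and the contradiction comes from \reflem{discontinuity}, proved via the structure theorem \refthm{BCT} (non-discrete, non-elementary, at most $N$ fixed points implies conjugacy into a dense subgroup of $\PSL^{(k)}(\RR)$): one extracts $\gamma\in\Gamma$ whose circle dynamics are recurrent (an elliptic-like element with dense orbits) while its boundary action on $S^{n-1}_\infty$ is loxodromic, so equivariance forces discontinuity at every point of the domain of equivariance. Your last paragraph has the same ``recurrence upstairs versus source--sink downstairs'' flavour, but without the regularity input it does not close; moreover your reduction chain (atomic fibre measure, exceptional-minimal-set collapse, Antonov/Deroin--Kleptsyn--Navas proximality) needs a Lyons--Sullivan-type discretization to convert the harmonic measure into a stationary measure on the fibre before any of those theorems apply, which you acknowledge but do not carry out, and the atomic case is dismissed by an unexplained ``unwinding the construction forces Type II.''

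The non-faithful case also rests on an unproved claim: that $\phi_*\lambda$ lies in the Lebesgue class on $S^{n-1}_\infty$ and is therefore non-atomic. Nothing in the construction of the Matsumoto map gives this a priori (in the Type I situation the leafwise boundary measure $[\mu_L]$ is a Dirac mass; identifying the pushforward of the transverse measure with the hitting-measure class would again require stationarity plus uniqueness via discretization, which you do not supply). What you actually need is only that $\phi_*\lambda$ is not concentrated on a two-point set, and this can be had much more cheaply: the paper, following Matsumoto's Proposition~6.6, uses the holonomy quasi-invariance of the transverse measure $\nu$ (\refthm{wellDefine}) and a Borel fundamental domain for the action of a loxodromic $\gamma\in\ker\rho$ on $S^{n-1}_\infty\setminus\Fix(\gamma)$, after first disposing of completely invariant harmonic measures separately. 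I would replace your ``Lebesgue class'' assertion by such a quasi-invariance argument; as written, both that step and the regularity step above are genuine gaps rather than routine verifications.
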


This result addresses the following question posed in \cite[Question 6.9]{Matsumoto12}.
\begin{ques}[Matsumoto]
    For an injective homomorphism $\rho$ from the fundamental group $\Gamma$ of a closed hyperbolic surface to $\PSL(\RR)$ with dense image, is any ergodic harmonic measure of the suspension of $\rho$ type II?
\end{ques}

By combining \refthm{indiscreteType2} with \cite[Example~6.5]{Matsumoto12} and \cite[Proposition~6.6]{Matsumoto12}, we can complete the classification of ergodic harmonic measures on the foliated $S^1$-bundle over a closed hyperbolic surface $S$, given as a suspension of a $\PSL(\RR)$-representation of $\pi_1(S)$.
\begin{cor}\label{Cor:classificationOverSurface}
    Let $\Gamma$ be a subgroup of $\PSL(\RR)$ such that $\HH^2/\Gamma$ is a closed hyperbolic surface and let $\cF$ the suspension foliation of a homomorphism $\rho:\Gamma\to \PSL(\RR)$.
     Then, the following holds.
    \begin{itemize}
         \item If $\rho$ is discrete and faithful, then there is a unique ergodic harmonic measure of $\cF$ that is of Type~I ;
        \item If $\rho$ is either with non-discrete image (equivalently, dense image) or non-faithful, then any ergodic harmonic measure of $\cF$ is of  Type~II.        
    \end{itemize}
\end{cor}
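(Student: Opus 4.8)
The plan is to deduce the corollary from \refthm{indiscreteType2} together with \cite[Example~6.5]{Matsumoto12} and \cite[Proposition~6.6]{Matsumoto12}, after two preliminary observations about $\PSL(\RR)$. First I would record that every element of $\PSL(\RR)\setminus\{\id\}$ acts on $S^1$ with at most two fixed points --- none if it is elliptic, one if parabolic, two if hyperbolic --- so the ``at most $N$ fixed points'' hypothesis in the faithful case of \refthm{indiscreteType2} holds automatically, with $N=2$, for \emph{every} homomorphism $\rho\colon\Gamma\to\PSL(\RR)$.

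Second, I would verify the parenthetical assertion that a \emph{faithful} $\rho\colon\Gamma\to\PSL(\RR)$ with non-discrete image in fact has dense image. Since $\HH^2/\Gamma$ is a closed surface, $\Gamma$ is non-elementary; in particular neither $\Gamma$ nor any finite-index subgroup of it is solvable or virtually abelian. Let $H=\overline{\rho(\Gamma)}\subseteq\PSL(\RR)$, a closed (hence Lie) subgroup of positive dimension. If the identity component $H^0$ is one-dimensional it is conjugate to $\SO$, to a diagonal subgroup, or to a unipotent subgroup, and the normalizer of $H^0$ in $\PSL(\RR)$ --- which contains $\rho(\Gamma)$ --- is respectively compact abelian, has an abelian subgroup of index two, or is solvable; if $H^0$ is the two-dimensional Borel subgroup, then by maximality of a Borel subgroup $H$ is either that solvable subgroup or all of $\PSL(\RR)$. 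As $\rho$ is faithful, none of the (virtually) solvable alternatives can occur, leaving $H=\PSL(\RR)$, i.e. $\rho(\Gamma)$ dense.

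Then I would observe that the two bullets follow. If $\rho$ is non-faithful, or faithful with non-discrete (equivalently, by the previous paragraph, dense) image, then the hypotheses of \refthm{indiscreteType2} are met --- using $N=2$ in the faithful case --- so every ergodic harmonic measure of $\cF$ is of Type~II, giving the second bullet. If instead $\rho$ is discrete and faithful, then $\rho(\Gamma)$ is a discrete torsion-free subgroup of $\PSL(\RR)$ isomorphic to a closed surface group; since a closed surface group is not free it cannot be the fundamental group of a non-compact complete hyperbolic surface, so $\rho(\Gamma)$ is cocompact. Hence $\rho$ and $\rho_\infty$ are both Fuchsian actions of the same closed surface group on $S^1$ and are therefore topologically conjugate, so that $\cF$ is --- up to a foliation-preserving homeomorphism conjugating $\rho$ to $\rho_\infty$ --- the tautological foliated circle bundle of \cite[Example~6.5]{Matsumoto12}; that example together with \cite[Proposition~6.6]{Matsumoto12} yields that $\cF$ carries a unique ergodic harmonic measure and that it is of Type~I, which is the first bullet.

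The only step that requires genuine argument rather than bookkeeping is the non-discrete $\Leftrightarrow$ dense dichotomy for faithful $\rho$, that is, excluding the intermediate closures $H$; but this is immediate from the classification of closed subgroups of $\PSL(\RR)$ combined with the non-solvability of surface groups, so once \refthm{indiscreteType2} is available the corollary is essentially formal.
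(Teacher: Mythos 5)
Your proposal is correct and takes essentially the same route as the paper: the second bullet is immediate from \refthm{indiscreteType2} with $N=2$, and for the first bullet you identify the suspension of a discrete faithful representation with the Anosov (tautological) foliation on the unit tangent bundle --- your cocompactness-and-conjugacy argument simply makes explicit what the paper asserts in one line --- and then invoke Matsumoto's results; your extra verification that non-discrete implies dense is fine but not needed, since \refthm{indiscreteType2} only requires non-discreteness. The one small slip is bibliographic: the uniqueness of the harmonic measure in the discrete faithful case is supplied by \cite[Theorem~5.7 and Example~5.8]{Matsumoto12} (equivalently, Garnett's uniqueness for the stable foliation of the geodesic flow), whereas \cite[Proposition~6.6]{Matsumoto12} concerns non-faithful actions and is not what yields uniqueness.
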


\section{Matsumoto Dichotomy}\label{Sec:MatsumotoDichotomy}
For our purpose, we briefly recall the Matsumoto dichotomy only for ergodic harmonic measures on compact $C^2$ foliation even though the original Matsumoto dichotomy addresses ergodic harmonic measures on compact $C^2$ laminations.
We refer to \cite{Matsumoto12} for the original version of the Matsumoto dichotomy.
Also, see \cite{CandelConlon00} for the basic terminology about foliations. 

\subsection{Compact hyperbolic $C^2$ foliations}
In this paper, a codimension $k$ foliation of $\cF$ of a closed $n$-manifold $M$ is said to be \emph{of class $C^2$} if there is an foliated atlas $\{E_\alpha, \varphi_\alpha\}_{\alpha\in \sU}$ associated with $(M,\cF)$ such that $\varphi_\alpha$ is a homeomorphism from $E_\alpha$ to $U_\alpha \times Z_\alpha$ for some open disks $U_\alpha\subset \RR^{n-k}$ and $Z_\alpha \subset \RR^k$ and whenever $E_\alpha \cap E_\beta \neq \emptyset$,  the transition map $\psi_{\beta \alpha}=\varphi_\beta \circ \varphi_\alpha^{-1}$ is of the form 
\[
\psi_{\beta \alpha}(u,z)=(\psi_{\beta \alpha}^1(u,z), \psi_{\beta \alpha}^2(z))
\]
where $\psi_{\beta \alpha}^i$ are continuous, and $\psi_{\beta \alpha}^1$ is of class $C^2$ with respect to the first coordinate $u$ and its first and second derivatives  with respect to $u$ are also  continuous in $z$. 

A \emph{compact $C^2$ foliation} is a triple $(M,\cF,g)$ if $\cF$ is of class $C^2$ and $g$ is a \emph{leafwise Riemannian metric of class $C^2$}, which is a continous field of leafwise metric tensor in $(M,\cF)$ such that its first and second leafwise derivatives are continuous. 
In particular, it is said to be \emph{hyperbolic} if $g$ is \emph{leafwise hyperbolic}, that is, $g$ has constant sectional curvature $-1$ on each leaf.

A typical example of a compact hyperbolic $C^2$ foliation is given by the \emph{suspension} of an action of the fundamental group of a closed hyperbolic manifold  on a closed manifold $Z$. 
More precisely, say that $\Gamma$ is  a discrete subgroup of $\Isomp(\HH^n)$ such that  $B=\HH^n/\Gamma$ is  a closed manifold, and  assume that there is an action $\rho: \Gamma \to \Homeo(Z)$.
The \emph{suspension} of $\rho$ is defined as the quotient space $B\times_\rho Z$ of $\HH^n\times Z$ by the diagonal action of $\Gamma$, that is, $B\times_\rho Z=(\HH^n\times Z)/\Gamma$ where $g\cdot (x,z)=(g(x), \rho(g)(z))$ for $g\in \Gamma$.     
Then, there is a natural foliation $\cF_\rho$ on $B\times_\rho Z$, called the \emph{suspension foliation} of $\rho$,  induced from the horizontal foliation $\{\HH^n\times \{z\}: z\in Z\}$ on $\HH^n\times Z$. 
From the construction, we can see that the suspension foliation $\cF_\rho$ admits a leafwise hyperbolic metric. 
Whenever we mention the \emph{standard leafwise hyperbolic metric} on $\cF_\rho$, it refers to the leafwise hyperbolic metric obtained above.  

In this paper, we consider the case where $Z=S^1$.
In this case, the suspension $B\times_\rho S^1$ is a foliated circle bundle over $B$.
See \cite{ThurstonSlitheringII} for a nice explanation of the relationship between circle actions of a closed hyperbolic manifold group and foliated circle bundles over the manifold.

\subsection{Harmonic measures}
Let $(M,\cF,g)$ be a compact $C^2$ foliation. 
For each continuous function $f$ on $M$ that is leafwise $C^2$, the leafwise Laplacian $\Delta f$ is well-defined. 
A \emph{harmonic measure} $m$ on $M$ is a probability measure on $M$ such that for any continuous leafwise $C^2$ function $f$ on $M$, \[m(\Delta f): = \int_M\Delta f dm =0.\] 
A harmonic measure $m$ is \textit{ergodic} if whenever it is written as a nontrivial linear combination of two harmonic measures $m_1$ and $m_2$ then $m = m_1 = m_2$.
Garnett \cite{Garnett} showed that a compact $C^2$ lamination always admits a harmonic measure. A simple proof of this fact was also provided by Candel \cite{Candel03}.

Here is an important local structure theorem of harmonic measure on compact $C^2$ foliation:
\begin{thm}[Local Structure of Harmonic Measure]\label{Thm:localStructure}
Let $(M,\cF,g)$ be  a compact $C^2$ foliation. 
Assume that $m$ is a harmonic measure on $(M,\cF,g)$.
For any foliated local chart $U\times Z$, associated with $(M,\cF)$, there is a pushforward measure $\nu: = (\pi_2)_{\ast} m$ on $Z$ for the projection $\pi_2:U\times Z\to Z$
onto the second coordinate and a
leafwise harmonic function $h:U\times Z\to \RR$ 
with the following properties. 
\begin{enumerate}
    \item $h$ is positive and $m$-measurable.

    \item For $\nu$-a.e.\ $z \in Z$, the restriction of 
$h$ to the plaque $U\times z$
is harmonic and $h\mathrm{vol}$ is a probablity measure of the plaque where $\mathrm{vol}$ is the volume form on each leaf.

    \item For any continuous function with 
support in $U\times Z$, we have
$$
m(f) = \int_{U\times Z} f\ dm=\int_Z\int_{U\times{z}}f(u,z)h(u,z)d{\rm vol}(u)d\nu(z).
$$
\end{enumerate}
In other words, we may write
\[dm = h(u,z)d\vol (u)d\nu(z).\]
\end{thm}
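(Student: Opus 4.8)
\textbf{Proof proposal for \refthm{localStructure}.}

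The plan is to reduce the global statement to the classical description of harmonic functions/measures on a single foliated chart, and then glue. First I would fix a foliated local chart $\varphi\colon E \to U\times Z$ associated with $(M,\cF)$ and transport the harmonic measure $m|_E$ to $U\times Z$ via $\varphi$; abusing notation I continue to call it $m$. The heart of the matter is the following \emph{disintegration}: because the plaques $U\times\{z\}$ are the local leaves, a harmonic measure must, on this chart, be an integral over $z$ of leafwise-harmonic densities. Concretely, let $\nu := (\pi_2)_\ast m$ on $Z$. By the measurable disintegration theorem (Rokhlin), $m = \int_Z m_z\, d\nu(z)$ where $m_z$ is a probability measure supported on the plaque $U\times\{z\}$, and $z\mapsto m_z$ is measurable. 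The key claim is that for $\nu$-a.e.\ $z$, the measure $m_z$ has a positive density $h(\cdot,z)$ with respect to the leafwise volume $\vol$ on $U\times\{z\}$, and that $h(\cdot,z)$ is harmonic on the plaque for the leafwise hyperbolic metric.

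To establish this claim I would use the defining property $m(\Delta f)=0$ applied to test functions of product type $f(u,z)=a(u)b(z)$ with $a\in C_c^\infty(U)$ and $b\in C(Z)$ (extended by zero outside the chart, which is legitimate since such $f$ has support in $E$ and is leafwise $C^2$). Since the leafwise Laplacian acts only in the $u$-variable, $\Delta f(u,z) = (\Delta_u a)(u)\,b(z)$, so $0 = \int_Z b(z)\big(\int_U (\Delta_u a)(u)\,dm_z(u)\big)d\nu(z)$ for all $b$, hence for $\nu$-a.e.\ $z$ the distribution $m_z$ satisfies $\int_U \Delta_u a\, dm_z = 0$ for all $a$ in a countable dense family, i.e.\ $m_z$ is a (distributionally) harmonic measure on the plaque $U\times\{z\}$ with its hyperbolic metric. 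By Weyl's lemma (elliptic regularity for the Laplace–Beltrami operator, which applies since $g$ is $C^2$ leafwise, in particular $C^{1,\alpha}$, giving enough regularity for interior elliptic estimates), $m_z = h(\cdot,z)\,\vol$ for a genuine harmonic function $h(\cdot,z) \geq 0$ on the plaque; positivity (strict) follows from the Harnack inequality since $h(\cdot,z)$ cannot vanish identically ($m_z$ is a probability measure, so in particular nonzero). Normalizing, $h(\cdot,z)\vol$ is a probability measure on the plaque, giving (2). Measurability of $(u,z)\mapsto h(u,z)$ in (1) comes from the measurability of the disintegration together with a Harnack-type a priori bound that lets one recover $h$ from $m_z$ continuously on compact subsets, so the joint map is a pointwise limit of measurable maps; alternatively one invokes the explicit Poisson-type representation of $h(\cdot,z)$ in terms of its boundary values against the heat kernel, which depends measurably on $z$. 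Finally (3), the integral formula $m(f)=\int_Z\int_{U\times z} f\,h\,d\vol\,d\nu$, is just the disintegration $m=\int_Z m_z\,d\nu$ rewritten with $dm_z = h\,d\vol$, extended from product-type $f$ to all continuous $f$ supported in the chart by Stone–Weierstrass and dominated convergence.

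The main obstacle I expect is the regularity bookkeeping: one must check carefully that the $C^2$ leafwise hypothesis on both $\cF$ and $g$ is enough to (a) make the leafwise Laplacian and Weyl's lemma applicable plaque-by-plaque with constants that are locally uniform in $z$ — this is what powers the measurability and positivity claims — and (b) ensure that the test functions $a(u)b(z)$, which are only continuous (not $C^2$) in $z$, are admissible in the definition of harmonic measure; this is fine because the definition only demands leafwise $C^2$ regularity, exactly the transverse-continuity/leafwise-$C^2$ asymmetry built into the notion of a $C^2$ foliation, but it is worth stating explicitly. A secondary technical point is that the plaques $U\times\{z\}$ are not complete hyperbolic surfaces, so ``harmonic'' here is purely the local PDE condition $\Delta_u h = 0$ on the disk $U$ with the metric $g(\cdot,z)$; no global potential theory on $\HH^n$ is needed at this stage, which keeps the argument self-contained. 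Everything else — the disintegration theorem, Weyl's lemma, Harnack — is standard, so once the regularity is pinned down the proof is essentially a transcription of Garnett's original local analysis (see \cite{Garnett}, \cite{Candel03}) into the present $C^2$ setting.
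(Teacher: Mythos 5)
The paper does not prove \refthm{localStructure} at all: it is quoted as a known result (Garnett's local structure theorem for harmonic measures, see \cite{Garnett}, \cite{Candel03}, and Matsumoto's exposition \cite{Matsumoto12}), so there is no in-paper argument to compare against. Your sketch reconstructs the standard proof from that literature, and its skeleton is correct: restrict $m$ to the flow box, disintegrate over the transversal via Rokhlin with respect to $\nu=(\pi_2)_\ast m$, test the harmonicity condition against product functions so that the leafwise Laplacian acts only in the $u$-variable, pass to a countable family of leafwise test functions to get that $\nu$-a.e.\ conditional $m_z$ is a distributional solution of the leafwise Laplace equation, upgrade to a genuine harmonic density by elliptic regularity (divergence-form operator with coefficients coming from a leafwise $C^2$ metric, so Weyl-type regularity applies plaque by plaque), use Harnack for strict positivity, and recover joint measurability of $h$ from the measurable dependence $z\mapsto m_z$. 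Note that no normalization step is actually needed: Rokhlin already produces probability conditionals, which is precisely why the transverse factor in the formula is the pushforward $\nu$ and why (2) asserts $h\,\mathrm{vol}$ is a probability measure on the plaque.

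One technical point deserves more care than you give it. The definition of a harmonic measure quantifies over functions that are continuous on the closed manifold $M$ and leafwise $C^2$, so your test function $f(u,z)=a(u)b(z)$ ``extended by zero'' is admissible only if it extends continuously to $M$, i.e.\ only if its support is compact inside the chart $E\cong U\times Z$. With $Z$ an open disk (as in the paper's definition of a foliated atlas) and $b$ merely continuous on $Z$, extension by zero can fail to be continuous at the transverse boundary of $E$. The fix is standard and harmless --- take $b\in C_c(Z)$, or run the argument on a subchart $U'\times Z'$ relatively compact in $E$ --- and since compactly supported continuous functions still separate measures on $Z$, the disintegration argument and the a.e.\ conclusion are unaffected; but as written this step is a genuine (if small) gap. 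The other caveats you flag yourself (regularity of the coefficients for Weyl's lemma, and the fact that ``harmonic'' on a plaque is purely the local equation $\Delta_u h=0$ for the metric $g(\cdot,z)$, with no global potential theory needed) are handled correctly.
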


Let $(M,\cF, g)$ be a compact  $C^2$ foliation.
When two foliated local charts $U\times Z$ and $U'\times Z'$ intersect, by applying the local structure theorem to each local chart, we have 
\[dm|_{U\times Z} = h\mathrm{vol}d\nu \text{ and } dm|_{U'\times Z'} = h'\mathrm{vol}d\nu'\]
and then
\[
h'/h=d\nu/d(\beta \nu')
\]
where $\beta$ is the holonomy map from a part of $Z'$ to $Z$. 
This shows that $\nu$ and $\nu'$ are equivalent via the holonomy map. Also, on each plaque, $h'$ is just a constant multiple of $h$.
Therefore, after taking a suitable normalization, one can prolong $h$  along a chain of plaques. 
Continuation of this process gives rise to the maximal prolongation, namely, 
the function $h$ is well-defined as a positive harmonic function on the holonomy cover $\hat{L}$ of the leaf $L$ containing the chains of plaques.
In particular, two such functions on $\hat{L}$ starting from the different plague can differ up to scalar multiple. We summarize as the following theorem.
In what follows, ``for an $m$-a.e. leaf $L$" means ``for any leaf $L$ in $M^*$",  for some saturated conull set $M^*$.
\begin{thm}[\cite{Matsumoto12}]
\label{Thm:wellDefine}
Let $m$ be a harmonic measure on a compact $C^2$ foliation. Then the following hold:
    \begin{enumerate}
        \item For an $m$-a.e. leaf $L$, the function $h$ has a well-defined prolongation as a positive harmonic function on the holonomy cover $\hat{L}$. On $\hat{L}$, two such functions which start from different plagues are unique up to a positive constant multiple.
        \item Given a path in $L$, the ratio of $h$ at the initial point and the terminal point of any lift of the path to $\hat{L}$ is constant.
    \end{enumerate}
\end{thm}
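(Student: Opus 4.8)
The plan is to build the prolongation chart by chart, using the local structure theorem (\refthm{localStructure}) as input and the uniqueness of the disintegration of $m$ as the gluing mechanism; the discussion preceding the statement already isolates the key identity, and what remains is to organize it into a coherent statement valid over an $m$-a.e.\ leaf.

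First I would fix, using compactness of $M$, a finite foliated atlas $\{E_\alpha = U_\alpha\times Z_\alpha\}$, refined so that every nonempty overlap $E_\alpha\cap E_\beta$ is connected and meets each leaf in at most one plaque of each chart. On every $E_\alpha$, \refthm{localStructure} supplies a transverse measure $\nu_\alpha=(\pi_2)_\ast(m|_{E_\alpha})$ and a positive, $m$-measurable, leafwise harmonic $h_\alpha$ with $dm|_{E_\alpha}=h_\alpha\,d\vol\,d\nu_\alpha$. On an overlap I would compare the two disintegrations of $m|_{E_\alpha\cap E_\beta}$: uniqueness of disintegration, together with the fact that the $Z_\beta$-fibration is carried onto the $Z_\alpha$-fibration by the holonomy map $\beta_{\beta\alpha}$, forces for $\nu_\alpha$-a.e.\ transverse point the relation $h_\beta/h_\alpha = d\nu_\alpha/d\big((\beta_{\beta\alpha})_\ast\nu_\beta\big)$ on the corresponding plaque. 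Two consequences: $\nu_\alpha$ and $\nu_\beta$ are mutually absolutely continuous via holonomy, and the ratio $h_\beta/h_\alpha$ is \emph{constant along each plaque} — call this constant $c_{\beta\alpha}(L)$ on the plaque of the leaf $L$. A parallel computation on triple overlaps gives the cocycle identity $c_{\gamma\alpha}=c_{\gamma\beta}\,c_{\beta\alpha}$ for $\nu$-a.e.\ leaf.

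Next I would assemble these local data. Since there are only finitely many charts and overlaps, intersecting the (transversally null) exceptional sets and saturating yields a single $\cF$-saturated conull set $M^\ast$ on which all the identities above hold; this is the meaning of ``$m$-a.e.\ leaf''. For a leaf $L\subset M^\ast$ and a path $\gamma$ in $L$ from $p$ to $q$, I would pick a chain of plaques $P_0,\dots,P_k$ covering $\gamma$ with $P_i\subset E_{\alpha_i}$ and consecutive plaques overlapping, then define the prolongation to be $h_{\alpha_0}$ on $P_0$ and, inductively, the previous piece rescaled by $c_{\alpha_i\alpha_{i-1}}(L)^{-1}$ times $h_{\alpha_i}$ on $P_i$, so the successive pieces agree on overlaps. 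The cocycle identity makes the outcome independent of the chain and of intermediate choices, and changing the initial normalization rescales the whole prolongation by one positive constant. For a loop $\gamma$ whose holonomy germ at $p$ is trivial, the telescoping product of the $c_{\alpha_i\alpha_{i-1}}(L)$ is the Radon--Nikodym derivative of $\nu_{\alpha_0}$ against its pushforward by a trivial holonomy germ, hence $1$, so the prolongation closes up. Therefore it descends to a single-valued positive leafwise-harmonic function $h$ on the holonomy cover $\hat L$ (harmonicity being a local condition already guaranteed by \refthm{localStructure}), unique up to a positive scalar, which is part (1). Part (2) then follows: for a lift of $\gamma$ to $\hat L$ with endpoints $\tilde p,\tilde q$, the ratio $h(\tilde q)/h(\tilde p)$ is unchanged when $h$ is rescaled and, by construction, equals the telescoping product of the $c_{\alpha_i\alpha_{i-1}}(L)^{-1}$ and the boundary values of the $h_{\alpha_i}$, which is a fixed number depending only on the lifted path.

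The hard part will not be any single computation but the measure-theoretic bookkeeping: making sure the various ``$\nu_\alpha$-a.e.'' statements attached to different charts and overlaps can be combined into one conull \emph{saturated} set (using that a finite union of transversally null sets is transversally null and that saturation preserves conullity for a harmonic measure), and pinning down that the holonomy cover is exactly the cover needed — i.e.\ that trivial-holonomy loops, and precisely those, are the ones forced to close the prolongation up.
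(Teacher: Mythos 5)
Your proposal follows essentially the same route as the paper's own treatment (the discussion preceding the statement, which the paper attributes to \cite{Matsumoto12}): compare the local disintegrations $dm=h\,d\vol\,d\nu$ on overlapping charts to obtain $h'/h=d\nu/d(\beta\nu')$, conclude that the ratio is a plaque-wise constant, and prolong $h$ along chains of plaques, the result descending to the holonomy cover and being unique up to a positive scalar. The only point worth making explicit is that the null-set bookkeeping for the trivial-holonomy closure must run over the countably many finite chains of charts (not merely the finitely many pairwise or triple overlaps), since the identification of the telescoped constants with the Radon--Nikodym derivative of the composite holonomy holds only $\nu$-a.e.\ for each fixed chain; as countable unions of transversally null sets are still transversally null and their saturations are $m$-null, this is harmless.
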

The immediate corollary is the following.
\begin{cor}[\cite{Matsumoto12}]
\label{Cor:equivariance}
    Let $\Gamma$ be the deck group of the universal covering $\tilde{L}\to L$. Then for any $\gamma\in\Gamma$, $h\circ\gamma$ is a constant multiple of $h$.
\end{cor}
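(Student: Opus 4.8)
The plan is to read this off directly from part (2) of \refthm{wellDefine}. Throughout I take the $h$ in the statement to be the pullback to $\tilde L$ of the prolonged positive harmonic function on the holonomy cover $\hat L$ produced by \refthm{wellDefine}; this is legitimate because $\hat L$ is a quotient of $\tilde L$, so the covering $\tilde L\to L$ factors through $\hat L\to L$ and $h$ pulls back to a positive, leafwise harmonic function on $\tilde L$. Fixing a basepoint $\tilde x_0\in\tilde L$ lying over $x_0\in L$, I would set $c_\gamma:=h(\gamma\tilde x_0)/h(\tilde x_0)>0$ and aim to show $h(\gamma\tilde x)=c_\gamma\,h(\tilde x)$ for every $\tilde x\in\tilde L$; since $c_\gamma$ manifestly does not depend on $\tilde x$, this is exactly the assertion.

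Given $\tilde x\in\tilde L$ over $x\in L$, choose a path $\alpha$ in $\tilde L$ from $\tilde x_0$ to $\tilde x$ and let $\bar\alpha$ be its projection to $L$, a path from $x_0$ to $x$. Because $\gamma$ is a deck transformation of $\tilde L\to L$, the translate $\gamma\alpha$ runs from $\gamma\tilde x_0$ to $\gamma\tilde x$ and projects to the \emph{same} path $\bar\alpha$. Pushing $\alpha$ and $\gamma\alpha$ forward to $\hat L$ therefore produces two lifts of $\bar\alpha$ to the holonomy cover, with endpoints the images of $\tilde x_0,\tilde x$ and of $\gamma\tilde x_0,\gamma\tilde x$ respectively. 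By part (2) of \refthm{wellDefine}, the ratio of the values of $h$ at the initial and terminal points is the same for these two lifts, and since $h$ on $\tilde L$ is the pullback of $h$ on $\hat L$ this reads
\[
\frac{h(\tilde x_0)}{h(\tilde x)}=\frac{h(\gamma\tilde x_0)}{h(\gamma\tilde x)}.
\]
Rearranging gives $h(\gamma\tilde x)=c_\gamma\,h(\tilde x)$, which proves $h\circ\gamma=c_\gamma\,h$. As a bonus, evaluating this identity for $\gamma=\gamma_1$ at the point $\gamma_2\tilde x_0$ shows that $\gamma\mapsto c_\gamma$ is a homomorphism $\Gamma\to\RR_{>0}$, which is the form in which the statement is usually applied.

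I do not anticipate a genuine obstacle: the substance is entirely contained in \refthm{wellDefine}, and the only thing requiring care is the covering-space bookkeeping — checking that $\alpha$ and $\gamma\alpha$ descend to two honest lifts of one and the same path in $L$, and that the proportionality constant is visibly independent of $\tilde x$. The one place a reader might hesitate is the convention that the $h$ on $\tilde L$ appearing in the statement is the pullback of the function on $\hat L$ from \refthm{wellDefine}, rather than some independently chosen prolongation; once that is pinned down, the argument above is immediate. (Alternatively one can argue via part (1) of \refthm{wellDefine}: $h\circ\gamma$ is again a prolongation of the leafwise local structure, started from a translated plaque, hence a constant multiple of $h$; but the path version avoids tracking normalization constants.)
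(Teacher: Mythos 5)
Your proof is correct and follows exactly the route the paper intends: the corollary is stated as an immediate consequence of \refthm{wellDefine}, and your argument just spells out the covering-space bookkeeping (pulling $h$ back from $\hat L$ to $\tilde L$ and comparing the two lifts $\alpha$ and $\gamma\alpha$ of the same path via part (2)). Nothing is missing; the observation that $\gamma\mapsto c_\gamma$ is a homomorphism is a correct bonus, though not needed for the statement itself.
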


The function $h$ associated with a harmonic measure $m$, defined in \refthm{wellDefine}, is called the \emph{characteristic function} of $m$. 
\begin{rmk}
By \refthm{wellDefine}, we stated above, the characteristic harmonic function is defined only up to a positive constant multiple.
\end{rmk}

\begin{defn}
    A harmonic measure $m$ on a compact $C^2$ foliation is called \emph{completely invariant} if the characteristic harmonic functions are constant on (the holonomy covers of) $m$-a.e. leaves. 
\end{defn}
\begin{rmk}
   If $m$ is a completely invariant harmonic measure on the suspension $B\times_\rho S^1$ for some closed manifold $B$ and some action $\rho:\pi_1(B)\to \Homeop(S^1)$, then $m$ corresponds to an invariant measure on $S^1$ under the action $\rho$.
\end{rmk}

\subsection{Matsumoto Dichotomy}

Let $m$ be a harmonic measure on a compact hyperbolic $C^2$ foliation $(M,\cF,g)$.
For the convenience, from now on, we use the Poincar\'e ball model $\DD^n$ instead of the half space model $\HH^n$.
Note that the universal cover of each leaf is identified with the hyperbolic space $\Bbb D^{d+1}$ for some positive integer $d$.

For an $m$-a.e. leaf $L$, we can define a measure class $[\mu_L]$ on the boundary $S^d_\infty$ of the universal cover $\tilde{L}=\DD^{d+1}$ as follows:
by \refthm{wellDefine}, the characteristic harmonic function $h$ of $m$ is defined on $\tilde{L}=\Bbb D^{d+1}$. Choose a base point $\tilde{x}\in\Bbb D^{d+1}$. Since $h$ is well-defined up to constant multiple by \refcor{equivariance}, we may assume that $h(\tilde{x}) = 1$. For any point $\xi$ of the ideal boundary $S^d_\infty$, we let $k_{\xi} = \exp(-d\cdot B_{\xi})$ where $B_{\xi}:\Bbb D^{d+1}\to\Bbb R$ is the Busemann function corresponding to $\xi$ such that $B_{\xi}(\tilde{x}) = 0$. Note that $k_{\xi}$ is the (minimal) positive harmonic function on $\Bbb D^{d+1}$ corresponding to $\xi$ normalized to take value $1$ at $\tilde{x}$. It is known that $k_\xi$ is the Poisson kernel of $\Bbb D^{d+1}$ and, by the Dirichlet correspondence, there is a unique probability measure $\mu_{\tilde{x}}$ on $S^d_{\infty}$ such that
$$h = \int_{S^d_\infty} k_{\xi}d\mu_{\tilde{x}}(\xi).$$
So far, we have associated each point $\tilde{x}\in \DD^d$ with the unique measure $\mu_{\tilde{x}}$ on $S^d_\infty$. Although the measure $\mu_{\tilde{x}}$ depends on the point $\tilde{x}$, its equivalence class of measure $[\mu_L]$ is an invariant of the leaf $L$.
Therefore, for an $m$-a.e. leaf $L$ , we can associate the unique class of measure $[\mu_L]$ on $\partial \tilde{L} = S^d_{\infty}$. 

\begin{defn}
    Let $(M,\cF,g)$ be a compact hyperbolic $C^2$ foliation.
    A harmonic measure $m$ on  $(M,\cF,g)$ is of \emph{Type I} if for an $m$-a.e. leaf $L$, the support of the associated measure class $[\mu_L]$ on $S^d_\infty$ is a singleton, and of \emph{Type II} if the support of $[\mu_L]$ is the whole $S^d_\infty$.
\end{defn}
Matsumoto dichotomy in the case of compact hyperbolic $C^2$ foliation is the following dichotomy of an ergodic harmonic measure.
\begin{thm}[\cite{Matsumoto12}]\label{Thm:MatsumotoDichotomy}
Any ergodic harmonic measure on a compact hyperbolic $C^2$ foliation is either of Type I or Type II.
\end{thm}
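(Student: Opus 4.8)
The plan is to establish the dichotomy by analyzing the support of the measure class $[\mu_L]$ directly, using the ergodicity of $m$ together with the $\Gamma$-equivariance coming from \refcor{equivariance}. The key structural fact is that $h$ is a positive harmonic function on $\tilde L = \DD^{d+1}$, hence is represented by a unique probability measure $\mu_{\tilde x}$ on $S^d_\infty$ via the Poisson integral; the support $\supp(\mu_{\tilde x})$ is a closed subset of $S^d_\infty$ that is independent of the base point $\tilde x$ (changing $\tilde x$ multiplies the Poisson kernel by a positive continuous function, which does not move the support), so it is a genuine invariant $\Sigma(L) := \supp([\mu_L])$ of the leaf $L$. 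The goal is to show $\Sigma(L)$ is $m$-almost everywhere either a single point or all of $S^d_\infty$.

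\smallskip

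\textbf{Step 1: the support function is measurable and leafwise constant in the right sense.} First I would check that $L \mapsto \Sigma(L)$ is a measurable map from $M$ (or rather from the conull saturated set $M^*$ where everything is defined) into the space $\mathcal{C}(S^d_\infty)$ of nonempty closed subsets with the Hausdorff topology; this follows from the measurability of $h$ in \refthm{localStructure}(1) and of the Poisson representation. Since $\Sigma(L)$ depends only on $L$ and not on the point of $L$, the function $x \mapsto \Sigma(L_x)$ descends to a measurable, $\cF$-saturated (leafwise constant) function on $M^*$.

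\smallskip

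\textbf{Step 2: extract an invariant for the whole $S^d_\infty$ action and invoke ergodicity.} The deck group $\Gamma_L$ of $\tilde L \to L$ acts on $S^d_\infty$ by conformal maps, and by \refcor{equivariance} $h \circ \gamma$ is a positive multiple of $h$, so the uniqueness in the Poisson representation forces $\gamma_* \mu_{\tilde x}$ to be in the class $[\mu_L]$; hence $\Sigma(L)$ is $\Gamma_L$-invariant as a subset of $S^d_\infty$. Now I would consider the two natural $\cF$-saturated measurable sets
\[
A_{\mathrm{I}} = \{\, x \in M^* : \Sigma(L_x) \text{ is a singleton} \,\},
\qquad
A_{\mathrm{II}} = \{\, x \in M^* : \Sigma(L_x) = S^d_\infty \,\}.
\]
Each is saturated and measurable, so by ergodicity of $m$ each has measure $0$ or $1$. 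It therefore suffices to prove that $A_{\mathrm{I}} \cup A_{\mathrm{II}}$ has full measure, i.e. that the ``intermediate'' set where $\Sigma(L)$ is a proper closed subset with more than one point is $m$-null.

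\smallskip

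\textbf{Step 3 (the crux): rule out intermediate supports via a zero-one/rigidity argument.} This is where I expect the real work to be, and I believe the mechanism is the harmonic-measure analogue of the fact that a minimal closed invariant set is canonical. Consider the "barycenter" or, better, the \emph{convex hull in $\DD^{d+1}$} of $\Sigma(L)$; if $\Sigma(L)$ is neither a point nor everything, its convex hull is a proper, nonempty, closed, $\Gamma_L$-invariant convex subset $C(L) \subsetneq \DD^{d+1}$. One then exploits that $h$ is harmonic and its representing measure is supported on $\Sigma(L)$: the function $d(\,\cdot\,, C(L))$, or a suitable harmonic-majorant/sweeping-out argument applied to $h$, produces a genuinely $m$-integrable, $\cF$-leafwise function on $M$ whose leafwise Laplacian has a sign — contradicting $m(\Delta f) = 0$ — unless $C(L)$ is degenerate ($\Sigma(L)$ a point) or all of $\DD^{d+1}$ ($\Sigma(L) = S^d_\infty$). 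Alternatively, and perhaps more robustly, one argues on the unit tangent bundle / foliated-harmonic-Brownian-motion picture: almost every leafwise Brownian path converges to a point $\xi \in S^d_\infty$ distributed according to $\mu_{\tilde x}$, and the exit measure's support being a proper subset forces a measurable $\Gamma_L$-equivariant "folding" of $S^d_\infty$ onto $\Sigma(L)$ that, combined with ergodicity and the recurrence of the foliated diffusion, cannot be strictly between the two extremes. The main obstacle is making this dichotomy argument clean: one must either build the right test function $f$ with a one-signed leafwise Laplacian out of the geometry of $\Sigma(L)$, or set up the harmonic-measure/Brownian-motion boundary theory carefully enough that the "no intermediate invariant closed set" statement follows from ergodicity; handling the holonomy cover versus universal cover and the up-to-scalar ambiguity of $h$ along the way is a persistent but routine bookkeeping nuisance.
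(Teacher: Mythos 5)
The paper does not prove this statement at all: it is quoted from \cite{Matsumoto12}, so your attempt has to be measured against Matsumoto's original argument. Your Steps 1 and 2 are correct but routine: basepoint-independence of $\supp(\mu_{\tilde x})$, the $\Gamma_L$-equivariance via \refcor{equivariance}, measurability of $L\mapsto\Sigma(L)$, and the zero--one law for saturated sets under ergodicity (which itself needs Garnett's equivalence of extremality with the zero--one law, \cite{Garnett}) reduce the theorem to showing the ``intermediate'' set is null. That reduction is exactly where the theorem begins, and your Step 3 is not a proof but a pair of candidate mechanisms, both of which break down as stated.

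Concretely: (i) the test-function scheme cannot work in the form you describe, because $m(\Delta f)=0$ is only available for functions that are admissible --- continuous leafwise $C^2$ on the compact space $M$, or at best bounded measurable with uniformly bounded leafwise derivatives --- whereas $d(\,\cdot\,,C(L))$ and its harmonic-majorant variants are unbounded on leaves and only measurable transversally. Worse, your scheme never uses that $\Sigma(L)$ has more than one point: applied to a singleton it would hand you the Busemann function $B_\xi$, whose leafwise Laplacian is the positive constant $d$, and would therefore ``rule out'' Type I measures as well --- but Type I measures exist (e.g.\ the unique harmonic measure on the Anosov foliation of the unit tangent bundle, \cite[Example~6.5]{Matsumoto12}). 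So any correct version must hinge precisely on the admissibility issue you are waving away. (ii) The Brownian-motion variant, ``the exit measure's support being a proper subset \dots cannot be strictly between the two extremes,'' is the statement to be proved; as written it is circular. Finally, note that $\Gamma_L$-invariance of $\Sigma(L)$ by itself has no force: leaves of a suspension can be simply connected (trivial stabilizer), in which case every closed subset of $S^d_\infty$ is invariant and no leafwise ``minimal invariant set'' rigidity can distinguish anything. The dichotomy genuinely requires the global harmonic structure --- how the characteristic sets vary transversally, martingale convergence of the characteristic function along leafwise Brownian paths, and ergodicity of the associated shift on path space --- which is the substance of Matsumoto's proof and is the missing idea here.
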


The above theorem says that for an ergodic harmonic measure $m$, either the support $K_L = \operatorname{supp}([\mu_L])$, called the \emph{characteristic set} of $L$, is a singleton for any $m$-a.e. leaf $L$, or the total space $S^d_\infty$ for any $m$-a.e. leaf $L$.

\section{The Matsumoto Map}
In this section, we recall the notion of the \textit{Matsumoto map} and investigate its continuity in the case of a $S^1$-bundle over a closed hyperbolic manifold, in order to prove \refthm{indiscreteType2}.

Let $B_\Gamma = \Bbb D^n/\Gamma$ be a closed hyperbolic $n$-manifold where $\Gamma$ is a discrete subgroup of $\mathrm{Isom}_+(\Bbb D^n)$ and let $\rho:\Gamma \to \Homeop(Z)$ an action of $\Gamma$ on some closed manifold $Z$.
Say that $M_\rho$ is the suspension $B_\Gamma\times_\rho Z$. Then, $M_\rho$ is equipped with the suspension foliation $\cF_\rho$. Suppose that $m$ is an ergodic harmonic measure on the compact hyperbolic $C^2$ foliation $(M_\rho,\cF_\rho, g)$ where $g$ is the standard leafwise hyperbolic metric.
Recall that $B_\Gamma\times_\rho Z=(\DD^n \times Z) /\Gamma$. 
We denote by $L_z$ the leaf of $\cF_\rho$, covered by $\DD^n\times z$.
By \refthm{MatsumotoDichotomy}, either the characteristic set $K_z$ of $L_z$ is a singleton (Type I) or the whole $S_\infty^{n-1}$ (Type II) for $\nu$-a.e. $z\in Z$. Here, $\nu$ is the pushforward  measure on $Z$, obtained by \refthm{localStructure}.

Denote by $\cC(S_\infty^{n-1})$ the space of closed subsets of $S_\infty^{n-1}$, equipped with the $\sigma$-algebra $\cB_\cC$ of the Chabauty topology (see, e.g., \cite[Section~E.1.]{Benedetti} for the basic theory of the Chabauty topology). 
From the above observation, we have an assignment $\frak{m}:Z \to \cC(S_\infty^{n-1}) $ given by 
$z\mapsto K_z$.
We call $\frak{m}$ the \emph{Matsumoto map} of $(\rho, m)$.
In \cite[Section~6]{Matsumoto12}, Matsumoto observed the following properties of $\frak{m}$.
\begin{lem}[\cite{Matsumoto12}]\label{Lem:keyProperty}
The following properties hold:
\begin{itemize}
    \item $\frak{m}$ is $\Gamma$-equivariant with respect to $\rho$ and the $\Gamma$-action on the boundary $S_\infty^{n-1}$, namely,
    \[
    \frak{m}(\rho(\gamma)\cdot z)=\gamma \cdot \frak{m}(z) ,\quad\text{for all }\gamma\in\Gamma,\quad \nu\text{-a.e. }z\in Z.
    \]
    \item $\frak{m}$ is $\nu$-measurable with respect to $\cB_\cC$.
\end{itemize}
\end{lem}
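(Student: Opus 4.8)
The plan is to obtain the equivariance directly from the suspension construction together with the uniqueness up to a positive scalar of the characteristic harmonic function (\refthm{wellDefine}), and to obtain the measurability by factoring $\mathfrak m$ through the space of probability measures on $S_\infty^{n-1}$ and then through the support map.

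For the equivariance, fix $\gamma\in\Gamma$ and recall that in $M_\rho=(\DD^n\times Z)/\Gamma$ the two slices $\DD^n\times\{z\}$ and $\DD^n\times\{\rho(\gamma)z\}$ project to one and the same leaf $L=L_z=L_{\rho(\gamma)z}$. Moreover the map $(x,w)\mapsto(\gamma x,\rho(\gamma)w)$ of $\DD^n\times Z$ descends to the identity of $M_\rho$, hence restricts to an isomorphism of $\DD^n\times\{z\}$ onto $\DD^n\times\{\rho(\gamma)z\}$ as universal covers of $L$, realized by the hyperbolic isometry $\gamma$ acting on the $\DD^n$-factor. Consequently, writing $h_z$ for the characteristic harmonic function of $m$ prolonged from a plaque inside $\DD^n\times\{z\}$ and normalized to equal $1$ at the base point $(x_0,z)$ for a fixed $x_0\in\DD^n$, the two positive harmonic functions $h_z$ and $h_{\rho(\gamma)z}\circ\gamma$ on $\DD^n\times\{z\}$ are both prolongations over $L$ of the characteristic function of $m$, so by \refthm{wellDefine}(1) one has $h_{\rho(\gamma)z}\circ\gamma=c\,h_z$ for some $c>0$. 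Substituting the Poisson--Dirichlet representation $h=\int_{S_\infty^{n-1}}k_\xi\,d\mu_{\tilde x}$ into this identity and using the transformation law of the Poisson kernel under the isometry $\gamma$, whose boundary extension is precisely the $\Gamma$-action on $S_\infty^{n-1}$, one finds that the Dirichlet measure of $h_{\rho(\gamma)z}\circ\gamma$ is mutually absolutely continuous with the pushforward by $\gamma^{-1}$ of the Dirichlet measure of $h_{\rho(\gamma)z}$. Since rescaling by $c$ changes neither the measure class nor its support, comparing supports gives $\gamma^{-1}\cdot K_{\rho(\gamma)z}=K_z$, i.e.\ $\mathfrak m(\rho(\gamma)z)=\gamma\cdot\mathfrak m(z)$. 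To make this hold for all $\gamma$ on a single $\nu$-conull set, observe that the exceptional set of \refthm{wellDefine} is an $m$-conull $\cF_\rho$-saturated set $M^\ast$, hence so is its complement; by \refthm{localStructure} and positivity of $h$, for $\nu$-a.e.\ $z$ the plaque through $(x_0,z)$ meets $M\setminus M^\ast$ in a $\vol$-null set, which for a plaque intersected with a saturated set forces $L_z\subset M^\ast$. For every such $z$ the computation above applies to all $\gamma\in\Gamma$ (using $L_{\rho(\gamma)z}=L_z\subset M^\ast$), and $\Gamma$ is countable, so the equivariance holds $\nu$-a.e.

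For the measurability I would factor $\mathfrak m$ as $z\mapsto\mu^z\mapsto\supp\mu^z=K_z$, where $\mu^z\in\cP(S_\infty^{n-1})$ denotes the Dirichlet measure of $h_z$ normalized at $(x_0,z)$. First, $z\mapsto h_z$ is $\nu$-measurable as a map into the positive harmonic functions on $\DD^n$ with the topology of local uniform convergence: $h$ is $m$-measurable by \refthm{localStructure}(1), and the prolongation of $h$ along chains of plaques in \refthm{wellDefine} is carried out by holonomy maps together with scalar renormalizations pinned down by the value at the transverse base point, all of which depend measurably on the transverse coordinate. Second, $\mu^z$ is recovered from $h_z$ by a measurable operation, e.g.\ as the weak-$*$ limit of the measures $h_z(r\,\eta)\,d\sigma(\eta)$ on $S_\infty^{n-1}$ as $r\uparrow 1$ along a fixed sequence, where $\sigma$ denotes the normalized surface measure; hence $z\mapsto\mu^z$ is $\nu$-measurable into the space of probability measures with the weak-$*$ topology. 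Third, the support map $\cP(X)\to\cC(X)$, $\mu\mapsto\supp\mu$, is Borel: for open $U$ the set $\{\mu:\supp\mu\cap U\neq\emptyset\}=\{\mu:\mu(U)>0\}$ is open, and for closed $F$ the set $\{\mu:\supp\mu\cap F=\emptyset\}=\bigcup_{k\ge1}\{\mu:\mu(F_{1/k})=0\}$ (with $F_{1/k}$ the $1/k$-neighbourhood of $F$) is Borel, and sets of these two kinds generate $\cB_\cC$. Composing the three maps shows that $\mathfrak m$ is $\nu$-measurable with respect to $\cB_\cC$.

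The main obstacle I expect lies in the first half of the measurability step: making rigorous the sense in which the prolonged characteristic function $h_z$ depends measurably on $z$ across overlapping foliated charts. The prolongation is canonical only up to a positive scalar, so one has to fix a measurable normalization (the value at a transversal base point), check that it is globally coherent, and verify measurability chart by chart using the quasi-invariance of $\nu$ under holonomy; one must also confirm that the weak-$*$ limit defining $\mu^z$ genuinely exists for $\nu$-a.e.\ $z$ and depends measurably on $z$. The equivariance, by contrast, is essentially formal once the suspension is unwound and the conformal behaviour of the Poisson kernel is used.
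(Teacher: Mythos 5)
The paper does not actually prove this lemma: it is stated as a quotation from \cite[Section~6]{Matsumoto12}, so there is no in-paper argument to compare yours against, and your proposal has to be judged on its own. On that basis it is essentially sound. Your equivariance argument is the right one and is complete in outline: the slices $\DD^n\times\{z\}$ and $\DD^n\times\{\rho(\gamma)z\}$ cover the same leaf, so by the uniqueness up to a positive scalar in \refthm{wellDefine} one gets $h_{\rho(\gamma)z}\circ\gamma=c\,h_z$, and the cocycle identity for Busemann functions shows that the representing measure of $h\circ\gamma$ is $k_{\gamma\eta}(\gamma x_0)\,d(\gamma^{-1}_*\mu)(\eta)$, hence equivalent to $\gamma^{-1}_*\mu$; comparing supports gives $K_{\rho(\gamma)z}=\gamma K_z$. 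Your bookkeeping with the saturated conull set $M^*$ (a saturated set meets a plaque either in the empty set or in the whole plaque, and $h>0$ forces the bad transverse set to be $\nu$-null) correctly produces one $\rho(\Gamma)$-invariant conull set on which this holds for all $\gamma$ simultaneously.

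For measurability, your second and third steps are solid: the correspondence between normalized positive harmonic functions on $\DD^n$ (local uniform topology) and probability measures on $S_\infty^{n-1}$ (weak-$*$) is a homeomorphism, and your verification that $\mu\mapsto\supp\mu$ is Borel for the Chabauty/Hausdorff topology is correct, since sets of the form $\{C:C\cap U\neq\emptyset\}$ and $\{C:C\cap F=\emptyset\}$ (with $U$ running over a countable base) generate $\cB_\cC$. The entire weight of the lemma therefore sits in your first step, the $\nu$-measurable dependence $z\mapsto h_z$ of the prolonged, base-point-normalized characteristic function, which you only sketch — and you correctly flag this as the obstacle. It is workable along the lines you indicate: for each plaque of $\DD^n\times\{z\}$ fix once and for all a finite chain of charts reaching it (in a suspension the chain and its holonomy, given by $\rho$ of a fixed deck transformation, do not depend on $z$), note that each renormalization constant is a ratio of the $m$-measurable local densities of \refthm{localStructure}, and upgrade pointwise measurability of $z\mapsto h_z(x)$ on a countable dense set of $x$ to measurability into the space of harmonic functions via Harnack. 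Alternatively, Garnett's global description of harmonic measures on suspensions, $dm=h(x,z)\,d\vol(x)\,d\nu(z)$ on $\DD^n\times S^1$ with $h$ jointly measurable, harmonic in $x$ and satisfying a $\Gamma$-cocycle identity, delivers this step immediately and is closer to how \cite{Matsumoto12} treats the suspension case. So: no step of your plan would fail, but as written it is a proof sketch whose one genuinely technical point is deferred rather than carried out.
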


\begin{const}[Matsumoto map of Type I]\label{Const:MatsumotoMap}
When $m$ is of Type $I$, $\frak{m}$ provides a map $\frak{n}:Z\to S_\infty^{n-1}$ such that there is a $\nu$-conull set $A\subset Z$ on which $\frak{n}$ is $\Gamma$-equivariant, as follows:
for a $\nu$-a.e. $z\in Z$, $K_z$ is a singleton, that is, there is a conull set $A$ such that $K_z=\{s_z\}$ for some $s_z\in S_\infty^{n-1}$.
By \reflem{keyProperty}, we can assume that $A$ is $\Gamma$-invariant.
Now, we define a $\Gamma$-equivariant map $\frak{n}$ from $A$ to $S_\infty^{n-1}$ by $z\mapsto s_z$. 
Then, by assigning  $Z\setminus A$ to an arbitrary point in $S_\infty^{n-1}$, we can extend the domain of $\frak{n}$, and  obtain a desired map $\frak{n}:Z\to S_\infty^{n-1}$.
If there is no confusion, then, by abusing the notation, we also denote $\frak{n}$ by $\frak{m}$ and call it the \emph{Matsumoto map}. Also, we call such a conull set $A$ a \emph{domain of equivariance} of  $\frak{m}$. 
\end{const}

Under the identification $s\in S_\infty^{n-1}$ with $\{s\}\in\mathcal{C}(S_\infty^{n-1})$, we can embed $\iota:S_\infty^{n-1}\hookrightarrow\mathcal{C}(S_\infty^{n-1})$ as a measurable subspace with the $\sigma$-algebra on $S_\infty^{n-1}$ induced from $\cB_\cC$.
Then, by the second property in \reflem{keyProperty} and the following lemma, we can see that $\frak{m}:Z\to S_\infty^{n-1}$ is $\nu$-measurable.

\begin{lem}\label{Lem:measurableEmb} 
Let $(X,d)$ be a compact metric space and $\cC(X)$ the space of closed subsets of $X$, equipped with the Borel $\sigma$-algebra of the Chabauty topology.
Under the identification $x\in X$ with $\{x\}\in\mathcal{C}(X)$, we can embed $\iota:X\hookrightarrow\mathcal{C}(X)$ as a measurable subspace with the $\sigma$-algebras on $X$, induced from the $\sigma$-algebra on $\cC(X)$.
Then, any open set $U\subset X$ is a Borel subset $\iota(U)\subset\mathcal{C}(X)$. In particular, the Borel $\sigma$-algebra of $\mathcal{C}(X)$ contains the Borel $\sigma$-algebra of $X$. 

\end{lem}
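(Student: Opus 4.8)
The plan is to unwind the Chabauty topology on $\mathcal{C}(X)$ and identify which subbasic sets it is generated by, then intersect them with the image of $\iota$. Recall that the Chabauty topology on $\mathcal{C}(X)$ for $X$ compact metric has a subbasis consisting of sets of the form
\[
\mathcal{U}^- = \{C\in\mathcal{C}(X) : C\cap U\neq\emptyset\}\quad\text{and}\quad \mathcal{U}^+=\{C\in\mathcal{C}(X): C\subset U\}
\]
for $U$ ranging over open subsets of $X$; equivalently, since $X$ is compact, $\mathcal{C}(X)$ with this topology is compact metrizable (it is the Hausdorff metric topology on the nonempty closed sets, together with the empty set as an isolated point, but one does not really need this). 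The key observation is simply that
\[
\iota^{-1}(\mathcal{U}^-) = \{x\in X : \{x\}\cap U\neq\emptyset\} = U,
\]
so $\iota$ is continuous, hence Borel-measurable, and therefore $\iota(U)=\iota(X)\cap\mathcal{U}^-$ is relatively Borel in $\iota(X)$. The step that needs a little more care is promoting "relatively Borel in $\iota(X)$" to "Borel in $\mathcal{C}(X)$": for this I would note that $\iota(X)$ is itself a Borel (indeed closed) subset of $\mathcal{C}(X)$, because $\iota(X)$ is the image of the compact space $X$ under the continuous injection $\iota$ into the Hausdorff space $\mathcal{C}(X)$, hence compact, hence closed. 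Since a relatively Borel subset of a Borel subset of a metric space is Borel in the ambient space, $\iota(U)$ is Borel in $\mathcal{C}(X)$, which is exactly the first claim.

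Concretely, I would carry out the argument in the following order. First, fix the description of the Chabauty topology and record that $X$ compact metric implies $\mathcal{C}(X)$ is compact Hausdorff (and metrizable), citing \cite[Section~E.1]{Benedetti}. Second, show $\iota:X\to\mathcal{C}(X)$ is continuous: it suffices to check preimages of subbasic open sets, and $\iota^{-1}(\mathcal{U}^-)=U$ while $\iota^{-1}(\mathcal{U}^+)=U$ as well, so $\iota$ is continuous. Third, deduce $\iota(X)$ is compact, hence a closed (in particular Borel) subset of $\mathcal{C}(X)$, and that $\iota:X\to\iota(X)$ is a homeomorphism (continuous bijection from compact to Hausdorff). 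Fourth, given an open $U\subset X$, write $\iota(U)=\iota(X)\cap\mathcal{U}^-$, which is open in the subspace $\iota(X)$, hence Borel in $\iota(X)$, hence — since $\iota(X)$ is Borel in $\mathcal{C}(X)$ — Borel in $\mathcal{C}(X)$. Fifth, conclude: the open sets of $X$ generate the Borel $\sigma$-algebra of $X$, so their images under $\iota$ generate the image $\sigma$-algebra, and we have just shown every such image is Borel in $\mathcal{C}(X)$; therefore the Borel $\sigma$-algebra of $X$ (transported by $\iota$) is contained in the restriction of $\mathcal{B}_{\mathcal{C}}$ to $\iota(X)$, which is the statement that $\iota$ embeds $X$ as a measurable subspace whose $\sigma$-algebra is induced from $\mathcal{B}_{\mathcal{C}}$.

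The only genuine subtlety — and the step I would flag as the main obstacle, though it is mild — is the passage from "Borel in the subspace $\iota(X)$" to "Borel in $\mathcal{C}(X)$." This is false for arbitrary subspaces, but it holds here because $\iota(X)$ is itself Borel (in fact closed) in $\mathcal{C}(X)$, and for a Borel subspace $A$ of a space $Y$ one has $\mathcal{B}(A)=\{B\cap A : B\in\mathcal{B}(Y)\}\subset\mathcal{B}(Y)$. So the crux is really the compactness argument showing $\iota(X)$ is closed, which in turn rests on $\mathcal{C}(X)$ being Hausdorff; both are standard facts about the Chabauty/Vietoris topology on a compact metric space. Everything else is a routine check of subbasic preimages. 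I would present the proof in exactly the five-step order above, keeping each step to one or two sentences.
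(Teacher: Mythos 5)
Your proposal is correct and follows essentially the same route as the paper: both arguments reduce to showing that $\iota$ is a closed embedding of $X$ into $\cC(X)$ with the Chabauty (Hausdorff-metric) topology and then express $\iota(U)$ as a Boolean combination of open sets of $\cC(X)$. The only cosmetic difference is that the paper gets closedness from convergence of singleton sequences in the Hausdorff metric and writes $\iota(U)$ as a difference of two open sets, whereas you get it from compactness after checking continuity on the subbasic sets $\mathcal{U}^-,\mathcal{U}^+$ and write $\iota(U)=\iota(X)\cap\mathcal{U}^-$, an intersection of a closed and an open set.
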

\begin{proof}

Note that since $(X,d)$ is a compact metric space, it follows from  \cite[Proposition E.1.3]{Benedetti} that 
$\mathcal{C}(X)$ is equipped with the Chabauty topology which is a metric topology induced by the Hausdorff distance associated with $d$.
It follows from \cite[Proposition E.1.2]{Benedetti} that if we choose a convergent sequence $X_n$ in $\mathcal{C}(X)$ such that $X_n =\{x_n\}$ and $X_n\to A$ in $\mathcal{C}(X)$, then $A$ is a singleton. This implies $\iota:X\hookrightarrow\mathcal{C}(X)$ is a closed embedding.
Now, if $U\subset X$ is an open subset, we can write \[\iota(U) = \Big(\mathcal{C}(X)\setminus \big(\iota(X)\setminus\iota(U)\big)\Big)\setminus \big(\mathcal{C}(X)\setminus\iota(X)\big).\] 
Since $\iota$ is a closed embedding, both $\mathcal{C}(X)\setminus \big(\iota(X)\setminus\iota(U)\big)$ and $\mathcal{C}(X)\setminus\iota(X)$ are open sets in $\mathcal{C}(X)$. 
Therefore, $\iota(U)$ is a Borel subset of $\mathcal{C}(X)$.
\end{proof}

\begin{cor}
    Any Matsumoto map given by \refconst{MatsumotoMap} is $\nu$-measurable.
\end{cor}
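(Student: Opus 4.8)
The plan is to combine the previously established properties of the Matsumoto map with the embedding lemma just proved. Recall that by \reflem{keyProperty}, the ``set-valued'' Matsumoto map $\frak{m}\colon Z\to\mathcal{C}(S_\infty^{n-1})$ is $\nu$-measurable with respect to the Borel $\sigma$-algebra $\cB_\cC$ of the Chabauty topology. In the Type I case, \refconst{MatsumotoMap} produces the point-valued map $\frak{n}\colon Z\to S_\infty^{n-1}$ which agrees with $\frak{m}$ on a $\nu$-conull set $A$ — precisely, $\iota\circ\frak{n}$ and $\frak{m}$ agree on $A$, where $\iota\colon S_\infty^{n-1}\hookrightarrow\mathcal{C}(S_\infty^{n-1})$ is the singleton embedding — and is defined arbitrarily (constant) off $A$.

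First I would observe that it suffices to check that $\frak{n}^{-1}(U)$ is $\nu$-measurable for every open $U\subset S_\infty^{n-1}$, since such sets generate the Borel $\sigma$-algebra of $S_\infty^{n-1}$ and $S_\infty^{n-1}$ is a compact metric space (it is the boundary sphere of hyperbolic space). Second, by \reflem{measurableEmb} applied with $X=S_\infty^{n-1}$, the set $\iota(U)$ is a Borel subset of $\mathcal{C}(S_\infty^{n-1})$, i.e.\ $\iota(U)\in\cB_\cC$. Third, since $\frak{m}$ is $\nu$-measurable with respect to $\cB_\cC$, the preimage $\frak{m}^{-1}(\iota(U))$ is $\nu$-measurable in $Z$. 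Finally, because $\iota\circ\frak{n}=\frak{m}$ on the conull set $A$, the symmetric difference of $\frak{n}^{-1}(U)$ and $\frak{m}^{-1}(\iota(U))$ is contained in $Z\setminus A$, which is $\nu$-null; hence $\frak{n}^{-1}(U)$ is $\nu$-measurable, being the union of a $\nu$-measurable set with a subset of a null set (and $\nu$ is complete, or one works with the completion). This proves $\frak{n}=\frak{m}$ is $\nu$-measurable.

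There is essentially no hard part here — the corollary is a bookkeeping consequence of \reflem{keyProperty} and \reflem{measurableEmb}. The only point requiring a modicum of care is the handling of the modification on the null set $Z\setminus A$: one should either pass to the $\nu$-completion of the Borel $\sigma$-algebra on $Z$ (which is the natural category in which ``$\nu$-measurable'' is being used throughout, in light of \refthm{localStructure}) or note explicitly that altering a measurable map on a null set preserves measurability in the completed sense. I would state the argument in two or three sentences and not belabor it.

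\begin{proof}
It suffices to show that $\frak{m}^{-1}(U)$ is $\nu$-measurable for every open subset $U\subset S_\infty^{n-1}$, since $S_\infty^{n-1}$ is a compact metric space and such sets generate its Borel $\sigma$-algebra. By \reflem{measurableEmb} (applied to $X=S_\infty^{n-1}$), the set $\iota(U)$ belongs to $\cB_\cC$. By the second item of \reflem{keyProperty}, the set-valued map $\frak{m}\colon Z\to\mathcal{C}(S_\infty^{n-1})$ is $\nu$-measurable with respect to $\cB_\cC$, so $\frak{m}^{-1}(\iota(U))$ is $\nu$-measurable in $Z$. Now let $A\subset Z$ be a domain of equivariance as in \refconst{MatsumotoMap}; then $\iota\circ\frak{n}$ agrees with $\frak{m}$ on $A$, so the symmetric difference $\frak{n}^{-1}(U)\,\triangle\,\frak{m}^{-1}(\iota(U))$ is contained in the $\nu$-null set $Z\setminus A$. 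Hence $\frak{n}^{-1}(U)$ differs from a $\nu$-measurable set by a $\nu$-null set, and is therefore $\nu$-measurable. Since $\frak{m}$ and $\frak{n}$ are identified, this shows that the Matsumoto map is $\nu$-measurable.
\end{proof}
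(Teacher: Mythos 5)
Your proof is correct and takes essentially the same route as the paper: reduce to preimages of open sets $U$, use \reflem{measurableEmb} to get $\iota(U)\in\cB_\cC$ and \reflem{keyProperty} to get measurability of $\frak{m}^{-1}(\iota(U))$, then dispose of the exceptional set $Z\setminus A$. The only (cosmetic) difference is that the paper handles the last step with an exact case split on whether the chosen point $p$ lies in $U$, writing $\frak{n}^{-1}(U)$ precisely as $\frak{m}^{-1}(\iota(U))$ minus or union the measurable null set $Z\setminus A$, whereas you invoke a symmetric-difference-in-a-null-set argument together with completeness of $\nu$ — both are fine.
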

\begin{proof}
    Let $\frak{n}:Z\to S_\infty^{n-1}$ a Matsumoto map given by \refconst{MatsumotoMap}.
    Say that $A$ is a domain of equivariance and $\frak{n}(Z\setminus A)=p$ for some $p\in S_\infty^{n-1}$.
    Choose any open subset $U$ in $S_\infty^{n-1}$. 
    By \reflem{measurableEmb}, $\iota(U)$ is in the Borel $\sigma$-algebra $\cB_\cC$ of $\cC(S_\infty^{n-1})$. 
    Since, by \reflem{keyProperty}, the original Matsumoto map $\frak{m}:Z\to \cC(S_\infty^{n-1})$ is $\nu$-measurable with respect to $\cB_\cC$, the preimage $\frak{m}^{-1}(\iota(U))$ is $\nu$-measurable.

    If $p\notin U$, then 
    we have 
    \[
    \frak{n}^{-1}(U)=\frak{m}^{-1}(\iota(U)) \setminus (Z\setminus A)
    \]
     and so $\frak{n}^{-1}(U)$ is $\nu$-measurable.
     Otherwise, $p\in U$ and
     \[
     \frak{n}^{-1}(U)=\frak{m}^{-1}(\iota(U)) \cup (Z\setminus A).
     \]
     In this case, $\frak{n}^{-1}(U)$ is also $\nu$-measurable.
     Thus, $\frak{n}$ is $\nu$-measurable.
\end{proof}

Now, we can summarize the above results as follows.
\begin{cor}\label{Cor:measurability}
Let $\Gamma$ be a discrete subgroup of $\Isomp(\DD^n)$ such that $\DD^n/\Gamma$ is a closed hyperbolic manifold, and $\rho$ an action of $\Gamma$ on a closed manifold $Z$, that is, $\rho:\Gamma \to \Homeo(Z)$.
Let $m$ be an ergodic harmonic measure on the compact hyperbolic $C^2$ foliation $(M_\rho,\cF_\rho,g)$ given by the suspension of $\rho$. Then, if $m$ is of Type I, then the Matsumoto map of $(\rho,m)$, $\frak{m}:Z\to S_\infty^{n-1}$ (in the sense of \refconst{MatsumotoMap}),  is $\nu$-measurable where $\nu$ is given by \refthm{localStructure}.
\end{cor}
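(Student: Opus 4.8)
The plan is to deduce the statement directly from the measurability facts already assembled, so the argument is really just bookkeeping. First I would note that the hypotheses place us exactly in the setting of \refconst{MatsumotoMap}: since $m$ is of Type~I, for $\nu$-a.e.\ $z$ the characteristic set $K_z$ is a singleton $\{s_z\}$, and \refconst{MatsumotoMap} yields a $\Gamma$-equivariant point map on a $\Gamma$-invariant conull domain of equivariance $A\subseteq Z$, extended by a constant value $p\in S_\infty^{n-1}$ across $Z\setminus A$; this is the map $\frak{m}:Z\to S_\infty^{n-1}$ whose measurability we must check.

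Next I would reduce to open sets: since $S_\infty^{n-1}$ is a compact metric space, hence second countable, it suffices to show $\frak{m}^{-1}(U)$ is $\nu$-measurable for every open $U\subseteq S_\infty^{n-1}$. By \reflem{measurableEmb} (with $X=S_\infty^{n-1}$), the image $\iota(U)$ lies in the Borel $\sigma$-algebra $\cB_\cC$ of $\cC(S_\infty^{n-1})$, and by the second item of \reflem{keyProperty} the set-valued Matsumoto map $z\mapsto K_z$ is $\nu$-measurable for $\cB_\cC$; hence the preimage of $\iota(U)$ under the set-valued map is $\nu$-measurable. On $A$ the point-valued and set-valued maps are related by $K_z=\{s_z\}=\iota(s_z)$, so $\frak{m}(z)\in U$ iff $K_z\in\iota(U)$ for $z\in A$; thus $\frak{m}^{-1}(U)$ differs from the $\nu$-measurable set $\{z: K_z\in\iota(U)\}$ only within $Z\setminus A$. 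Since $\nu$-measurability is preserved under symmetric difference with a $\nu$-null set, $\frak{m}^{-1}(U)$ is $\nu$-measurable, and running $U$ over a countable basis finishes the proof. (Equivalently, one splits into the two cases $p\in U$ and $p\notin U$ and writes $\frak{m}^{-1}(U)$ as an explicit union or difference of $\nu$-measurable sets, exactly as in the corollary immediately preceding this statement.)

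There is no real obstacle here. The only point requiring a moment's care is that the arbitrary extension of the point map across the null set $Z\setminus A$ in \refconst{MatsumotoMap} does not spoil measurability --- and this is automatic precisely because $Z\setminus A$ is $\nu$-null, so no matter how $\frak{m}$ is defined there, the preimages change only by a null set. If the ambient $\sigma$-algebra is not assumed $\nu$-complete, I would simply read ``$\nu$-measurable'' as ``measurable with respect to the $\nu$-completion'', which is the natural convention in this context.
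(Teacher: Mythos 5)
Your argument is correct and follows essentially the same route as the paper: the paper proves this in the unnamed corollary just before the statement (reduce to open sets $U$, use \reflem{measurableEmb} to see $\iota(U)\in\cB_\cC$, use \reflem{keyProperty} for measurability of the set-valued map, and handle the null set $Z\setminus A$ by the two cases $p\in U$, $p\notin U$), and \refcor{measurability} merely summarizes that. Your added remark about reading $\nu$-measurability in the $\nu$-completion is a reasonable way to make the null-set adjustment fully precise.
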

Recall the generalized Lusin's theorem (\cite[Theorem 2B]{Fremlin81}; see also \cite[Theorem 7.14.25]{Bogachev}).

\begin{thm}[\cite{Fremlin81}]\label{Thm:continuity}
Let $\mu$ be a Radon measure on a topological space $X$ and let $Y$ be a metric space. A mapping $\varphi:X\to Y$ is measurable with respect to $\mu$ if and only if it is almost continuous.
\end{thm}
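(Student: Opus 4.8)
The statement is the classical Lusin-type equivalence, so I would prove the two implications separately, with the reverse direction being essentially trivial and the forward one containing all the content.

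\emph{Almost continuity implies $\mu$-measurability.} Suppose that for every $\varepsilon>0$ there is a compact set $K\subset X$ with $\mu(X\setminus K)<\varepsilon$ on which $\varphi$ restricts to a continuous map. Taking $\varepsilon=1/j$ produces compact sets $K_j$ with $\mu(X\setminus K_j)<1/j$, and then $N:=X\setminus\bigcup_j K_j$ is $\mu$-null. For any open $U\subset Y$ the set $\varphi^{-1}(U)\cap K_j=(\varphi|_{K_j})^{-1}(U)$ is relatively open in the compact set $K_j$, hence Borel in $X$. Therefore
\[
\varphi^{-1}(U)=\Big(\bigcup_j\big(\varphi^{-1}(U)\cap K_j\big)\Big)\cup\big(\varphi^{-1}(U)\cap N\big)
\]
is the union of a Borel set and a subset of a null set, so it is $\mu$-measurable by completeness of the Radon measure. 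Since $Y$ is metric its Borel $\sigma$-algebra is generated by the open sets, so $\varphi$ is $\mu$-measurable.

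\emph{$\mu$-measurability implies almost continuity.} Fix $\varepsilon>0$. First I would reduce to $\mu$ finite: by inner regularity choose a compact $C\subset X$ with $\mu(X\setminus C)<\varepsilon/2$ and work with $\mu|_C$, a finite Radon measure. I would also use that $\varphi$ may be assumed to have separable range — this is automatic in the case relevant to this paper, where $Y$ is a boundary sphere, hence a compact (so separable) metric space. Now for each $n\ge1$ cover $Y$ by countably many open balls $\{B_{n,k}\}_{k\ge1}$ of radius $1/n$, put $A_{n,k}=\varphi^{-1}(B_{n,k})$, and disjointify to a measurable partition $A'_{n,k}=A_{n,k}\setminus\bigcup_{j<k}A_{n,j}$ of $X$ (up to a null set). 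By inner regularity pick compact $K_{n,k}\subset A'_{n,k}$ absorbing most of the mass of $A'_{n,k}$; because $\mu$ is finite and $\bigcup_k A'_{n,k}$ is conull, finitely many of them, say for $k\le m_n$, already satisfy $\mu\big(X\setminus\bigcup_{k\le m_n}K_{n,k}\big)<\varepsilon\,2^{-n-1}$. Set $F_n=\bigcup_{k\le m_n}K_{n,k}$.

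The key observation is that the finitely many sets $K_{n,1},\dots,K_{n,m_n}$ are pairwise disjoint compacta, hence pairwise at positive distance, so if we choose $y_{n,k}\in B_{n,k}$ the map $\psi_n\colon F_n\to Y$ equal to $y_{n,k}$ on $K_{n,k}$ is locally constant on $F_n$, hence continuous, and $d(\psi_n(x),\varphi(x))<2/n$ for all $x\in F_n$ since both points lie in $B_{n,k}$. Put $K=C\cap\bigcap_{n\ge1}F_n$; with the obvious bookkeeping of the constants one gets $K$ compact and $\mu(X\setminus K)<\varepsilon$. On $K$ the continuous maps $\psi_n|_K$ converge uniformly to $\varphi|_K$, and a uniform limit of continuous maps into a metric space is continuous; hence $\varphi|_K$ is continuous. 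This proves almost continuity. \emph{The main obstacle} is the organization in this last direction: one must arrange that, thanks to finiteness of $\mu$, only finitely many compact approximants are needed at each scale $1/n$ while keeping them disjoint, so that the locally constant approximants $\psi_n$ are continuous on $F_n$. The separability of the range is the only point where the precise nature of $Y$ enters; it is free for $Y=S^{n-1}_\infty$, the only case this paper needs, so one may restrict to that case and otherwise cite the general statement from \cite{Fremlin81}.
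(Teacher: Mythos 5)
The paper does not actually prove this statement: it is quoted as a black box from \cite[Theorem 2B]{Fremlin81} (see also \cite[Theorem 7.14.25]{Bogachev}), so there is no internal proof to compare against. Your write-up is a correct, self-contained proof of the classical Lusin-type theorem in the situation the paper actually uses it in --- $\varphi$ with separable (indeed compact metric) range and $\mu$ of finite total mass --- and the easy direction via completeness of the Radon measure and relative openness of $(\varphi|_{K_j})^{-1}(U)$ in $K_j$ is fine. Two local points in the hard direction need repair, though. First, ``pairwise at positive distance'' is meaningless: $X$ is only a topological space, with no metric. The correct justification is that a Radon measure lives on a Hausdorff space, so the finitely many pairwise disjoint compacta $K_{n,1},\dots,K_{n,m_n}$ are closed, hence each is relatively open in $F_n$ (its complement in $F_n$ is the finite union of the others); therefore $\psi_n$ is locally constant on $F_n$ and continuous, as you want, but not for the reason you give. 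Second, the reduction to finite measure by choosing a single compact $C$ with $\mu(X\setminus C)<\varepsilon/2$ is impossible when $\mu(X)=\infty$; for a general Radon measure ``almost continuous'' must be read as inner regularity of $\mu$ by compacta on which $\varphi$ is continuous (equivalently one runs your construction inside an arbitrary measurable set of finite measure), and your bookkeeping should be phrased that way. In the paper's application $\nu$ is the pushforward of a probability measure and $Y=S_\infty^{n-1}$ is compact metric, so both caveats are harmless there.

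Beyond these repairs, be aware that what you prove is genuinely weaker than the theorem as stated: Fremlin's contribution is precisely that no separability hypothesis on $Y$, nor on the essential range of $\varphi$, is needed, and that case cannot be reached by the countable-cover-by-$1/n$-balls argument at all --- it requires a different (and substantially harder) argument. You flag this honestly and propose to cite \cite{Fremlin81} for the general statement while proving only the case the paper needs; that is a reasonable division of labor, and it is in effect what the paper itself does, but as a proof of the statement in its stated generality your text is incomplete by design rather than by oversight.
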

Combining \refcor{measurability} with the above theorem, we conclude that the Matsumoto map is almost continuous.
\begin{cor}\label{Cor:continuity}
The Matsumoto map $\frak{m}$ given by \refconst{MatsumotoMap} is continuous $\nu$-a.e.
\end{cor}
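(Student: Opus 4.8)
The plan is to deduce this directly from the generalized Lusin theorem, \refthm{continuity}, once its hypotheses have been checked. First I would record that $\nu$ is a Radon measure on $Z$: by \refthm{localStructure} it is the pushforward $(\pi_2)_\ast m$ of the harmonic probability measure $m$ under a continuous map, hence a finite Borel measure on $Z$; and since $Z$ is a closed manifold, hence a compact metrizable space, every finite Borel measure on $Z$ is automatically Radon (inner regular with respect to compact sets). Second, the target $S_\infty^{n-1}$ is a metric space, say with the round metric, or any metric inducing its topology.

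With these two observations in hand, \refcor{measurability} says exactly that the Matsumoto map $\frak{m}\colon Z\to S_\infty^{n-1}$ produced by \refconst{MatsumotoMap} is $\nu$-measurable, so \refthm{continuity} applies verbatim and yields that $\frak{m}$ is almost continuous. Unwinding the definition, this means that for every $\varepsilon>0$ there is a closed set $K\subset Z$ with $\nu(Z\setminus K)<\varepsilon$ on which $\frak{m}$ restricts to a continuous map, and hence an increasing sequence of such closed sets whose union is $\nu$-conull with $\frak{m}$ continuous on each of them; this is what ``$\frak{m}$ is continuous $\nu$-a.e.'' is meant to abbreviate.

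There is essentially no obstacle here: all the substantive work was carried out in establishing $\nu$-measurability of $\frak{m}$ in \refcor{measurability} (via the measurable embedding $\iota\colon S_\infty^{n-1}\hookrightarrow\cC(S_\infty^{n-1})$ of \reflem{measurableEmb} together with the $\cB_\cC$-measurability of the set-valued map in \reflem{keyProperty}), and the present corollary is simply the translation of that fact into a continuity statement via Lusin's theorem. The only point worth flagging is interpretive rather than mathematical: ``continuous $\nu$-a.e.'' must be read in the Lusin sense just described — continuity of the restriction to a $\nu$-conull union of closed sets — and not as pointwise almost-everywhere continuity, which this argument does not furnish.
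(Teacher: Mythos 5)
Your proposal is correct and follows essentially the same route as the paper: the paper's proof is exactly the one-line combination of \refcor{measurability} with the generalized Lusin theorem (\refthm{continuity}), with the Radon property of $\nu$ and the metrizability of $S_\infty^{n-1}$ left implicit, which you merely make explicit. Your closing caveat that ``continuous $\nu$-a.e.'' is to be read in the Lusin sense (continuity of restrictions to closed sets of nearly full measure) rather than as pointwise a.e.\ continuity is consistent with how the paper uses the term here.
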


\section{Discontinuity of Matsumoto maps}

Let $B_\Gamma$ be a closed hyperbolic $n$-manifold such that $\Gamma\leq \Isomp(\DD^n)$ and $B_\Gamma=\DD^n/\Gamma$ (as in the previous section).
For $N\in \NN$, we say that a subgroup $G$ of $\Homeop(S^1)$ \emph{has at most $N$ fixed points} if each non-trivial element of $G$ has at most $N$ fixed point. 
In this section, we discuss about the Matsumoto maps associated wtih an action $\rho$ of $\Gamma$ on the circle $S^1$ under the following conditions:
\begin{itemize}
    \item $\rho(\Gamma)$ is non-discrete in $\Homeop(S^1)$, and
    \item $\rho(\Gamma)$ has at most $N$ fixed points.
\end{itemize} 
In the end, we prove \refthm{indiscreteType2}.

In the proof of \refthm{indiscreteType2},  we make use of the results of \cite{BCT24} and some basic properties of circle actions. 
Hence, we first summarize the basic facts about the group actions on the circle and the results of \cite{BCT24}.

Recall that $\Homeop^{(k)}(S^1)$ is a $k$-fold covering of $\Homeop(S^1)$ such that the following diagram commutes for all $g\in\Homeop^{(k)}(S^1)$:
\[\begin{tikzcd}
	{\RR/k\ZZ} & {\RR/k\ZZ} \\
	{S^1} & {S^1}
	\arrow["g", from=1-1, to=1-2]
	\arrow["{p_k}"', from=1-1, to=2-1]
	\arrow["{p_k}", from=1-2, to=2-2]
	\arrow["{q_k(g)}", from=2-1, to=2-2]
\end{tikzcd}\]
where $p_k:\RR/k\ZZ\to\RR/\ZZ = S^1$ is the natural $k$-fold cover.
$\PSL^{(k)}(\RR)$ is the fiber product given by the following commutative diagram where the lower row is a central extension:
\[\begin{tikzcd}
	&& {\PSL^{(k)}(\RR)} & {\PSL(\RR)} \\
	1 & {\ZZ/k\ZZ} & {\Homeop^{(k)}(S^1)} & {\Homeop(S^1)} & 1
	\arrow[from=1-3, to=1-4]
	\arrow[from=1-3, to=2-3]
	\arrow[from=1-4, to=2-4]
	\arrow[from=1-4, to=2-5]
	\arrow[from=2-1, to=2-2]
	\arrow[from=2-2, to=1-3]
	\arrow[from=2-2, to=2-3]
	\arrow["{q_k}", from=2-3, to=2-4]
	\arrow[from=2-4, to=2-5]
\end{tikzcd}\]

The following theorem is the combination of \cite[Lemma~4.3.]{BCT24} and \cite[Theorem~C.]{BCT24}.
\begin{thm}\label{Thm:BCT}
    Let $G<\Homeop(S^1)$ be a non-elementary, non-discrete subgroup with at most $N$ fixed points. Then, there exists $k\geq 1$ such that $G$ is conjugate to a dense subgroup of $\PSL^{(k)}(\RR)$, that is, $fGf^{-1}$ is a dense subgroup of $\PSL^{(k)}(\RR)$ for some $f\in \Homeop(S^1)$.
\end{thm}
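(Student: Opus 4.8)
The plan is to analyze the closure $H := \overline{G}$ of $G$ inside $\Homeop(S^1)$ (with the $C^0$ topology) and to identify $H$, up to topological conjugacy, with $\PSL^{(k)}(\RR)$; density of $G$ in $\PSL^{(k)}(\RR)$ is then immediate since $G \le H$ is dense by construction. First I would record the dynamical consequences of the hypotheses. Since $G$ is non-elementary, it has no finite orbit and preserves no probability measure on $S^1$; consequently it has a unique minimal set $\Lambda$, which is either a Cantor set or all of $S^1$, and the minimal action is proximal. I would reduce to the minimal case: if $\Lambda$ were a Cantor set, the bounded-fixed-point hypothesis prevents nontrivial elements from having long ``tangential'' germs at the complementary gaps, and collapsing the countable family of gaps yields a semiconjugate minimal action. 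I would then argue that, under the ``at most $N$ fixed points'' condition, no wandering intervals can survive, so that this semiconjugacy upgrades to a conjugacy and we may assume after a conjugation that $G$, hence $H$, acts minimally on $S^1$.

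The heart of the argument is to show that $H$ is a nontrivial Lie group, equivalently that its identity component $H^0$ is nontrivial. Here non-discreteness enters: there are nontrivial $g_n \in G$ with $g_n \to \id$ in $C^0$. Using proximality I would locate a point $p \in S^1$ that is an attracting fixed point of some element of $G$ and study the germs at $p$ of the near-identity elements $g_n$. A renormalization argument of Szekeres--Sacksteder type extracts from such a sequence a nontrivial one-parameter subgroup (a flow) lying in $H$, showing $H^0 \neq \{\id\}$. The bounded-fixed-point hypothesis controls this construction: it guarantees the relevant elements have isolated, topologically hyperbolic fixed points so that the renormalization converges, and it bounds the number of zeros of the vector field generating the resulting flow.

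With $H$ a closed, non-discrete, minimal subgroup of $\Homeop(S^1)$ with nontrivial identity component, I would invoke the classification of such groups: up to topological conjugacy the only possibilities are $\SO$, $\PSL^{(k)}(\RR)$, and $\Homeop^{(k)}(S^1)$ for some $k \ge 1$. The case $H = \SO$ is excluded because rotation groups preserve Lebesgue measure and are therefore elementary, contradicting the hypothesis on $G$. The case $H = \Homeop^{(k)}(S^1)$ is excluded by the fixed-point bound: this group contains elements with arbitrarily many transverse fixed points, and transverse fixed points persist under $C^0$-small perturbations, so density of $G$ in $H$ would force $G$ to contain an element with more than $N$ fixed points. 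Hence $H$ is conjugate to $\PSL^{(k)}(\RR)$; and since a non-discrete, closed, minimal, non-elementary subgroup of $\PSL^{(k)}(\RR)$ must be the whole group (its proper closed subgroups being discrete, or conjugates of $\SO$ and the affine group $\Aff^{(k)}$, all elementary), we get $H = \PSL^{(k)}(\RR)$ up to conjugacy, with $G$ dense.

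The main obstacle is the Lie-theoretic step of the second paragraph. Because $\Homeop(S^1)$ is not locally compact and does contain arbitrarily small subgroups, one cannot directly apply the Montgomery--Zippin solution of Hilbert's fifth problem to conclude that $\overline{G}$ is a Lie group; genuine input is needed to rule out wild accumulations of near-identity elements. This is precisely where the uniform bound $N$ on fixed points is essential: it supplies the rigidity (isolated, stable fixed points and a uniform bound on zeros of limiting vector fields) that forces the near-identity dynamics to organize into an honest flow and thereby makes the closure finite-dimensional. Carrying out this renormalization carefully, together with verifying the upgrade from semiconjugacy to conjugacy in the Cantor case, are the two places where the real work lies.
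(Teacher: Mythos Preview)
The paper does not prove this theorem; it is quoted as the combination of \cite[Lemma~4.3]{BCT24} and \cite[Theorem~C]{BCT24} and used as a black box. So there is no ``paper's own proof'' to compare against---your sketch is an attempt at an independent argument for a result the authors import from the literature.

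That said, your outline follows the natural strategy for statements of this type (pass to $\overline{G}$, extract a one-parameter subgroup, classify closed transitive subgroups of $\Homeop(S^1)$), and is broadly in the spirit of how such results are actually proved. Two comments on the details. First, your reduction to the minimal case is too quick: collapsing gaps only gives a semiconjugacy, and the claim that ``no wandering intervals can survive'' under the $N$-fixed-point hypothesis requires a genuine argument---you would need to show that a nontrivial element stabilizing a gap is forced either to be the identity on that gap or to generate unbounded numbers of fixed points elsewhere, and neither is immediate. Second, your exclusion of $H=\Homeop^{(k)}(S^1)$ via persistence of \emph{topologically} transverse fixed points under $C^0$ perturbation is sound (the intermediate value theorem suffices), so that step is fine. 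As you correctly flag, the substantive difficulty is the Lie step---manufacturing an honest flow inside $\overline{G}$ from a sequence $g_n\to\id$---and your Sacksteder-type heuristic, while pointing in the right direction, is not yet a proof; this is precisely the content supplied by the cited reference.
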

Given a group action $\rho:G\to \Homeop(S^1)$, we denote by $\Lambda(\rho)$ the \emph{limit set} of $\rho$, which is defined as 
\[
\Lambda(\rho)=\bigcap_{s\in S^1} \closure{\rho(G)\cdot s}.
\]
Recall the following statement from \cite[Lemma 8.5.]{KKM19}
\begin{lem}\label{Lem:finiteCoverLimitSet}
    Let $G$ be a finitely generated group, and let $1\leq k<\infty$. If there exists a commutative diagram
    \[\begin{tikzcd}
	& {\mathrm{Homeo}_+^{(k)}(S^1)} \\
	G & {\mathrm{Homeo}_+(S^1)}
	\arrow["{q_k}", from=1-2, to=2-2]
	\arrow["\rho", from=2-1, to=1-2]
	\arrow["{\rho_\circ}"', from=2-1, to=2-2]
\end{tikzcd}\]
    then we have
    $$\Lambda(\rho) = p_k^{-1}\circ\Lambda(\rho_\circ).$$
\end{lem}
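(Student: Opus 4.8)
The plan is to exploit two features of the $k$-fold covering $p_k\colon \RR/k\ZZ\to S^1$: it is a proper continuous map, equivariant for $\rho$ above and $\rho_\circ$ below, and its deck group $\langle T\rangle\cong\ZZ/k\ZZ$, generated by the translation $T(x)=x+1$, is central in $\Homeop^{(k)}(S^1)$ and so commutes with every element of $\rho(G)$. First I would establish the inclusion $\Lambda(\rho)\subseteq p_k^{-1}(\Lambda(\rho_\circ))$. Fixing $t\in S^1$ and a lift $s\in p_k^{-1}(t)$, compactness of $\RR/k\ZZ$ together with continuity and equivariance of $p_k$ give $p_k\big(\closure{\rho(G)\cdot s}\big)=\closure{\rho_\circ(G)\cdot t}$; since $\Lambda(\rho)\subseteq\closure{\rho(G)\cdot s}$, applying $p_k$ and then intersecting over all $t$ yields $p_k(\Lambda(\rho))\subseteq\Lambda(\rho_\circ)$.

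Next I would record that $\Lambda(\rho)$ is $\langle T\rangle$-invariant: since $T$ commutes with $\rho(G)$ one has $T\closure{\rho(G)\cdot s}=\closure{\rho(G)\cdot(Ts)}$, whence $T\Lambda(\rho)=\bigcap_{s}\closure{\rho(G)\cdot(Ts)}=\Lambda(\rho)$ after reindexing $s\mapsto Ts$. As the fibers of $p_k$ are exactly the (free) $\langle T\rangle$-orbits, every $\langle T\rangle$-invariant set $A$ satisfies $p_k^{-1}(p_k(A))=A$; in particular $\Lambda(\rho)=p_k^{-1}\big(p_k(\Lambda(\rho))\big)$, so it remains only to prove the equality $p_k(\Lambda(\rho))=\Lambda(\rho_\circ)$ of subsets of $S^1$, of which ``$\subseteq$'' is already in hand.

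For ``$\supseteq$'' I would invoke the classification of minimal sets for circle actions. In the settings where this lemma is applied, $\rho_\circ$ is non-elementary --- by \refthm{BCT} it is conjugate into $\PSL^{(k)}(\RR)$ --- hence has no finite orbit; a finite $\rho(G)$-orbit would have finite $\rho_\circ(G)$-invariant image, forcing a finite $\rho_\circ(G)$-orbit, so $\rho$ also has no finite orbit. Therefore each of $\rho$ and $\rho_\circ$ has a \emph{unique} minimal set (the whole space, or a Cantor set), and this minimal set --- being contained in every nonempty closed invariant set and being the orbit closure of each of its points --- coincides with the corresponding limit set; in particular $\Lambda(\rho)\neq\emptyset$. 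Then $p_k(\Lambda(\rho))$ is a nonempty closed $\rho_\circ(G)$-invariant set, hence contains the unique minimal set $\Lambda(\rho_\circ)$; feeding this into the previous paragraph gives $p_k^{-1}(\Lambda(\rho_\circ))\subseteq p_k^{-1}\big(p_k(\Lambda(\rho))\big)=\Lambda(\rho)$, which together with the first inclusion yields $\Lambda(\rho)=p_k^{-1}(\Lambda(\rho_\circ))$.

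The step I expect to be the main obstacle is this last inclusion. Comparing $\closure{\rho(G)\cdot s}$ directly with $\closure{\rho_\circ(G)\cdot p_k(s)}$ only shows that $\closure{\rho(G)\cdot s}$ \emph{meets} each $p_k$-fiber over $\Lambda(\rho_\circ)$, not that it \emph{contains} one, because $p_k^{-1}\big(\closure{\rho_\circ(G)\cdot p_k(s)}\big)$ is the $\langle T\rangle$-saturation of $\closure{\rho(G)\cdot s}$ and may be strictly larger. Passing through the (automatically $\langle T\rangle$-invariant) unique minimal set is what bridges this gap, and is also where non-elementariness of $\rho_\circ$ --- automatic in our applications --- enters: without such a hypothesis one can arrange $\Lambda(\rho)=\emptyset$ while $p_k^{-1}(\Lambda(\rho_\circ))\neq\emptyset$.
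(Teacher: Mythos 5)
The paper never proves this lemma at all --- it is recalled verbatim from \cite[Lemma~8.5]{KKM19} --- so there is no in-paper argument to compare against, and I am judging your proposal on its own merits. Your covering-space mechanics are fine: $p_k$ is equivariant and closed, the deck transformation $T$ is central in $\Homeop^{(k)}(S^1)$ (this is exactly the central extension displayed in the paper), hence $\Lambda(\rho)$ is $T$-invariant and therefore $p_k$-saturated, and the inclusion $\Lambda(\rho)\subseteq p_k^{-1}(\Lambda(\rho_\circ))$ follows unconditionally. Your reduction of the reverse inclusion to uniqueness of the minimal set is also correct as far as it goes.

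The gap is that your proof of the reverse inclusion rests on a hypothesis that is not in the statement: that $\rho_\circ$ (hence $\rho$) has no finite orbit, so that each limit set is the unique minimal set and in particular nonempty. You import this from ``the settings where the lemma is applied'' and justify it by non-elementarity via \refthm{BCT}, but that does not match how the lemma is actually used here: in \reflem{discontinuity} it is applied to the restriction to the cyclic subgroup $H=\langle\gamma\rangle$, whose image downstairs is generated by an infinite-order elliptic of $\PSL(\RR)$ --- an elementary action. What makes that application work is minimality (an irrational rotation has no finite orbit), not non-elementarity; and indeed your argument only ever uses ``no finite orbit''. Moreover, some hypothesis of this kind is genuinely necessary, as you yourself suspected: with the paper's definition $\Lambda(\rho)=\bigcap_{s}\closure{\rho(G)\cdot s}$, take $k=2$, $G=\ZZ$, let $\rho_\circ$ be generated by a homeomorphism $f$ with a single fixed point $x_0$ (a parabolic), and let $\rho$ be generated by the lift of $f$ fixing both points of $p_2^{-1}(x_0)$; then $\Lambda(\rho_\circ)=\{x_0\}$, while upstairs the two lifted fixed points have disjoint singleton orbit closures, so $\Lambda(\rho)=\emptyset\neq p_2^{-1}(\Lambda(\rho_\circ))$. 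So the lemma as literally recalled cannot be proved; the honest fix is to state it with the hypothesis that $\rho_\circ$ has no finite orbit (your derivation that this passes to $\rho$ is correct), verify that hypothesis directly in the application (immediate, since an infinite-order elliptic acts minimally), or check under what hypotheses, or with what definition of limit set, \cite[Lemma~8.5]{KKM19} is actually stated.
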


Given a bijection $f$ on a set $X$, we denote the set of all fixed points of $f$ by $\Fix(f)$.

Now, we are ready to prove \refthm{indiscreteType2}. We first show the following lemma, which,  together with \refthm{continuity},
implies the non-existence of Type I Matsumoto maps in the sense of \refconst{MatsumotoMap}. 

\begin{lem}\label{Lem:discontinuity}
Let $\Gamma$ be a discrete subgroup of  $\Isomp(\DD^n)$ such that $\DD^n/\Gamma$ is a closed hyperbolic manifold, and $\rho:\Gamma\to \Homeop(S^1)$ a faithful action with non-discrete image. 
Assume that $\rho(\Gamma)$ has at most $N$ fixed points.
If $\varphi$ is a map (which is not necessarily continuous) from a subset $A\subset S^1$ to the ideal boundary $S_\infty^{n-1}$ of $\DD^n$  satisfying the following properties:
\begin{itemize}
    \item $A$ is $\rho(\Gamma)$-invariant, that is, $\rho(g)(A)=A$ for all $g\in \rho(\Gamma)$;
    \item $\varphi(\rho(g)(z))=g(\varphi(z))$ for all $z\in A$ and $g\in \Gamma$, 
\end{itemize}
then $\varphi$ is discontinuous at every point in $A$.
\end{lem}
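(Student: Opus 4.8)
The plan is to argue by contradiction: suppose $\varphi$ is continuous at some $z_0\in A$ and derive a contradiction from the density of $\rho(\Gamma)$ in (a conjugate of) $\PSL^{(k)}(\RR)$ provided by \refthm{BCT}. First I would pass to the relevant setting: since $\rho$ is faithful with non-discrete image and $\rho(\Gamma)$ has at most $N$ fixed points, and since a closed hyperbolic manifold group is non-elementary (it is not virtually cyclic), the group $G=\rho(\Gamma)$ satisfies the hypotheses of \refthm{BCT}. So after conjugating by some $f\in\Homeop(S^1)$ — which changes nothing essential about continuity of $\varphi$, since we may replace $\varphi$ by $\varphi\circ f^{-1}$ on $f(A)$ — we may assume $G$ is a \emph{dense} subgroup of $\PSL^{(k)}(\RR)$. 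Via the covering $q_k:\Homeop^{(k)}(S^1)\to\Homeop(S^1)$ and the induced map $p_k:\RR/k\ZZ\to S^1$, density of $G$ in $\PSL^{(k)}(\RR)$ forces the limit set $\Lambda(\rho_\circ)$ of the underlying $\PSL(\RR)$-action to be all of $S^1$ (the image is dense in $\PSL(\RR)$), hence by \reflem{finiteCoverLimitSet} we get $\Lambda(\rho)=S^1$; in particular every $G$-orbit in $S^1$ is dense, so $A$ is dense in $S^1$ and $z_0$ is a limit of points of $A$.

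The heart of the argument is a \emph{North–South / contraction} mechanism. In $\PSL^{(k)}(\RR)$, lifting a hyperbolic element $\gamma$ of $\PSL(\RR)$ with attracting fixed point $a\in S^1$ and repelling fixed point $r\in S^1$, one gets an element whose dynamics on $\RR/k\ZZ$ has attracting points $p_k^{-1}(a)$ and repelling points $p_k^{-1}(r)$; iterating $\gamma^n$ pushes every compact set away from the repellers uniformly towards the attractors. By density, I can choose a sequence $g_m\in G$ converging in $\PSL^{(k)}(\RR)$ to a high power of such a hyperbolic-type element, arranged so that: (i) $z_0$ is not a repelling fixed point of $g_m$, and (ii) the $g_m$ contract a fixed neighborhood $U$ of $z_0$ into smaller and smaller neighborhoods of an attracting point $\xi_m$, with $\xi_m$ ranging over a dense subset of $S^1$. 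Concretely I would fix two distinct points $w_1,w_2\in A$ near $z_0$ (possible since $A$ is dense), and find, for any target pair of attracting/repelling data, elements $g_m\in G$ with $g_m(w_1)$ and $g_m(w_2)$ both converging to a common attracting point $\eta$ while staying inside $A$ (equivariance keeps us in $A$). Then continuity of $\varphi$ at $z_0$, applied along the sequences $w_1\to z_0$ and $w_2\to z_0$, forces $\varphi(w_1)$ and $\varphi(w_2)$ to be close, but the equivariance relation $\varphi(g_m(w_i))=g_m(\varphi(w_i))$ ties $\varphi(g_m(w_i))$ to the $\Gamma$-action on $S_\infty^{n-1}$.

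To turn this into a contradiction I would use the two sequences converging to the \emph{same} point in $S^1$ to pin down $\varphi$ on a large set, then exploit the dynamics on the boundary sphere. The cleanest route: show that continuity at $z_0$ together with equivariance forces $\varphi$ to be "locally constant up to the boundary action" in a way incompatible with $\Gamma$ acting on $S_\infty^{n-1}=S^{n-1}$ (which, for a lattice action on the sphere, has no finite invariant set and the orbit-closure structure is rich). More precisely: pick $g_m\in G$ so that $g_m|_U\to\xi$ uniformly for a chosen $\xi\in S^1$; then for all large $m$ and all $z\in U\cap A$, $g_m(z)$ lies in a tiny neighborhood $V_m$ of $\xi$, so by continuity of $\varphi$ at points near $\xi$ (after a separate application, or by re-centering the argument at $\xi$ instead of $z_0$) the values $\varphi(g_m(z))=g_m(\varphi(z))$ must all be close to $\varphi(\xi)$ if $\xi\in A$, or more robustly: the images $g_m(\varphi(z))$ for fixed $z$ and varying $m$ would have to converge, forcing $\varphi(z)$ to lie in the basin where the $g_m$-action on $S_\infty^{n-1}$ converges — but the $g_m$ as elements of $\Gamma\le\Isomp(\DD^n)$ need not have convergent dynamics on $S_\infty^{n-1}$ at all, since $\rho$ need not be faithful-with-discrete-image there; the contradiction comes from choosing two different convergent $\rho$-sequences $(g_m),(g_m')$ with $g_m|_U,g_m'|_U$ both converging to the same $\xi\in S^1$ but with $g_m,g_m'$ having \emph{different} limiting behavior on $S_\infty^{n-1}$ (achievable because $\Gamma$ is a lattice, hence the $\Gamma$-action on $S_\infty^{n-1}$ is minimal and proximal, so orbits are dense and one can separate limits). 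Then $\varphi(\xi^-):=\lim\varphi(g_m(z))$ would have to equal both $\lim g_m(\varphi(z))$ and $\lim g_m'(\varphi(z))$, which are forced to differ.

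\textbf{Main obstacle.} The delicate point is organizing the two competing limits so that the circle-side dynamics ($g_m|_U$ and $g_m'|_U$ converging to the \emph{same} $\xi$, which is what continuity of $\varphi$ on $S^1$ sees) is genuinely decoupled from the boundary-sphere dynamics (where $g_m$ and $g_m'$ must diverge, which is what breaks equivariance). This decoupling is exactly where density of $\rho(\Gamma)$ in $\PSL^{(k)}(\RR)$ does the work — it gives enormous freedom to prescribe the $S^1$-dynamics of $g_m$ while the corresponding elements of $\Gamma$ (hence their $S_\infty^{n-1}$-dynamics) are only constrained to lie in a coset, so infinitely many choices are available and a pigeonhole/diagonal argument over the compact group $\Isom(\DD^n)$ (or the compact space of boundary maps) yields two choices with distinct limits. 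Making the "continuity at one point propagates" step precise — i.e.\ that continuity of $\varphi$ at $z_0$ plus density of the orbit actually gives control of $\varphi$ on a neighborhood, not just at $z_0$ — will require care, and is likely handled by re-centering: the hypothesis is that IF $\varphi$ is continuous at even one point we reach a contradiction, so it suffices to use continuity at that single $z_0$ together with the fact that $G$-images of any neighborhood of $z_0$ can be made to shrink to points densely distributed in $S^1$, hitting a neighborhood of $z_0$ itself, giving a self-referential constraint on $\varphi$ near $z_0$ that the boundary action cannot satisfy.
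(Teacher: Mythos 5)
Your setup (pass to $fGf^{-1}$ dense in $\PSL^{(k)}(\RR)$ via \refthm{BCT}, then play circle dynamics against the $\Gamma$-action on $S_\infty^{n-1}$ through the equivariance of $\varphi$) is the right frame, but the step that is supposed to produce the contradiction is not actually carried out, and as sketched it does not work. Continuity is available only at the single point $z_0$; your contraction scheme sends a neighborhood $U$ of $z_0$ to small neighborhoods of some other point $\xi$, and then you need $\lim_m\varphi(g_m(z))$ to exist and to be the same for two competing sequences, which would require continuity of $\varphi$ at $\xi$ --- something you do not have. You acknowledge this (``re-centering'') but never make it precise, and the precise version is exactly the missing content of the proof. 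Moreover, the proposed pigeonhole ``over the compact group $\Isom(\DD^n)$'' is not available: $\Isom(\DD^n)$ is not compact. What you actually have is that $\Gamma$ is a discrete cocompact subgroup, so any sequence of distinct elements has (after passing to a subsequence) source--sink convergence dynamics on $S_\infty^{n-1}$; your remark that the $g_m$ ``need not have convergent dynamics on $S_\infty^{n-1}$ at all'' has this backwards --- it is the circle side, not the sphere side, where discreteness fails. Finally, producing two sequences with identical circle-side limits at $z_0$ but distinct sphere-side attracting points is a nontrivial claim that you assert rather than prove.

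The paper's proof avoids all of this with a one-element recurrence argument. Since $q_k(fGf^{-1})$ is dense in $\PSL(\RR)$ and $\Gamma$ is torsion-free, there is $\gamma\in\Gamma$ whose image $\rho'(\gamma)$ in $\PSL(\RR)$ is elliptic of infinite order; by \reflem{finiteCoverLimitSet} the cyclic group $\langle\rho_\circ(\gamma)\rangle$ has limit set all of $S^1$, so for any $p\in A$ there is a sequence $n_k\to\infty$ with $\rho_\circ(\gamma)^{n_k}(p)\to p$. On the other hand $\gamma$, being a nontrivial element of a cocompact lattice in $\Isomp(\DD^n)$, is loxodromic on $S_\infty^{n-1}$, so if $\varphi(p)\notin\Fix(\gamma)$ then $\gamma^{n_k}(\varphi(p))$ converges to the attracting fixed point of $\gamma$, contradicting continuity at $p$ via $\varphi(\rho_\circ(\gamma)^{n_k}(p))=\gamma^{n_k}(\varphi(p))$; a second element $\eta$ with $\Fix(\eta)\cap\Fix(\gamma)=\emptyset$ on $S_\infty^{n-1}$ and the same circle-side property handles the case $\varphi(p)\in\Fix(\gamma)$. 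In short: instead of decoupling two sequences, the decoupling is achieved inside a single element --- irrational-rotation-like (recurrent) on $S^1$, North--South on $S_\infty^{n-1}$ --- which is exactly the ingredient your sketch lacks and which makes continuity at the one point $p$ sufficient.
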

\begin{proof}
Say $G = \rho(\Gamma)<\Homeop(S^1)$. 
Note that $\rho(\Gamma)$ is non-elementary, that is, there is no $\rho(\Gamma)$-invariant measure.
By \refthm{BCT} and the assumption, $G$ is conjugate to a dense subgroup of $\PSL^{(k)}(\RR)$, i.e., there is $f\in\Homeop(S^1)$ such that $fGf^{-1}$ is a dense subgroup of $\PSL^{(k)}(\RR)<\Homeop(\RR/k\ZZ)$. Since $q_k:\PSL^{(k)}(\RR)\to\PSL(\RR)$ is a group homomorphism which is also a $k$-fold covering map, we conclude $q_k(fGf^{-1})$ is a dense subgroup of $\PSL(\RR)$.
    
From the given representation $\rho:\Gamma\to\Homeop(S^1)$, we have an induced faithful representation $\rho_\circ:\Gamma\to\PSL^{(k)}(\RR)<\Homeop(S^1)\equiv \Homeop(\RR/k\ZZ)$ by $\rho_\circ = f\rho f^{-1}$. Put $\rho' = q_k\circ\rho_\circ:\Gamma\to\PSL(\RR)$. Since $\rho'$ is a dense subgroup of $\PSL(\RR)$ and $G$ is torsion-free, there is an element $\gamma\in\Gamma$ such that $\rho'(\gamma)\in\PSL(\RR)$ is an elliptic element of infinite order. 
    
Let $H = \langle\gamma\rangle<\Gamma$ and consider the restriction $\rho'|_H = q_k\circ\rho_\circ|_H$ of the representation. Then by \reflem{finiteCoverLimitSet}, we have \[ \Lambda(\rho_\circ|_H) = p_k^{-1}\circ\Lambda(\rho'|_H)\]
Since $\Lambda(\rho'|_H) = S^1$, $\Lambda(\rho_\circ|_H) = S^1$. As before, we may choose another element $\eta\in\Gamma$ so that the loxodromic isometries $\gamma$ and $\eta$ do not share a fixed point on $S_\infty^{n-1}$, and $\Lambda(\rho_\circ|_K) = S^1$ for $K = \langle\eta\rangle$. 
    
Now, we fix the chosen $\gamma$ and $\eta$ in $\Gamma$ and write $\gamma_\circ = \rho_\circ(\gamma)$ and $\eta_\circ = \rho_\circ(\eta)$. Choose a point $p\in A$. 
Since $\Fix(\gamma)\neq\Fix(\eta)$ on $S_\infty^{n-1}$, either $\varphi(p)\notin\Fix(\gamma)$ or $\varphi(p)\notin\Fix(\eta)$. So we may assume $\varphi(p)\notin\Fix(g)$. Since $\Lambda(\rho_\circ|_H)$ is dense, there is an increasing sequence $\{n_k\}_{k\in\NN}$ of positive numbers such that $\{\gamma_\circ^{n_k}(p)\}_{k\in\NN}$ is a sequence of distinct points converging to $p$. If $\varphi$ is continuous at $p$, then $\varphi(\gamma_\circ^{n_k}(p))\to\varphi(p)$ as $k\to\infty$. But $\varphi(\gamma_\circ^{n_k}(p)) = \gamma^{n_k}(\varphi(p))$ by assumption, $\{\varphi(\gamma_\circ^{n_k}(p))\}_{k\in\NN}$ converges to the attracting fixed point of $\gamma$. This contradicts the fact that $\varphi(p)\notin\Fix(\gamma)$. Therefore, $\varphi$ is discontinuous at $p$.  
    Since the choice of $p$ is arbitrary, we can obtain the desired result.
\end{proof}

\begin{cor}\label{Cor:faithfulCase}
Let $\Gamma$ be a discrete subgroup of  $\Isomp(\DD^n)$ such that $\DD^n/\Gamma$ is a closed hyperbolic manifold, and $\rho:\Gamma\to \Homeop(S^1)$ a faithful action with non-discrete image. 
Assume that $\rho(\Gamma)$ has at most $N$ fixed points. Then, any ergodic harmonic measure on the suspension foliation $(M_\rho,\cF_\rho,g)$ is of Type II.
\end{cor}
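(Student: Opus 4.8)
The plan is to deduce \refcor{faithfulCase} by combining \reflem{discontinuity} with \refcor{continuity} and the \refconst{MatsumotoMap} dichotomy. First I would argue by contradiction: suppose some ergodic harmonic measure $m$ on $(M_\rho,\cF_\rho,g)$ is \emph{not} of Type II. Since $(M_\rho,\cF_\rho,g)$ is a compact hyperbolic $C^2$ foliation (it is the suspension of a circle action of a closed hyperbolic manifold group, hence its leaves carry the standard leafwise hyperbolic metric and each leaf's universal cover is $\DD^n$), the Matsumoto dichotomy (\refthm{MatsumotoDichotomy}) forces $m$ to be of Type I. Here the relevant dimension is $d+1=n$, so the characteristic set of $\nu$-a.e.\ leaf $L_z$ is a singleton in $S^{n-1}_\infty$.

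Next I would invoke \refconst{MatsumotoMap} to produce the associated Matsumoto map $\frak{m}=\frak{n}:S^1\to S^{n-1}_\infty$ together with a domain of equivariance $A\subset S^1$, i.e.\ a $\nu$-conull, $\rho(\Gamma)$-invariant set on which $\frak{n}(\rho(\gamma)(z))=\gamma(\frak{n}(z))$ for all $\gamma\in\Gamma$ (this is exactly the equivariance coming from \reflem{keyProperty}, rephrased for the singleton-valued map; note the $\Gamma$-action on $S^{n-1}_\infty$ is the boundary action of $\Gamma<\Isomp(\DD^n)$). By \refcor{measurability} the map $\frak{n}$ is $\nu$-measurable, and since $\nu$ is a Radon (in fact Borel probability) measure on the compact metric space $S^1$ and $S^{n-1}_\infty$ is a metric space, \refcor{continuity} (the generalized Lusin theorem, \refthm{continuity}) gives that $\frak{n}$ is continuous at $\nu$-a.e.\ point of $S^1$. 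In particular, since $A$ is $\nu$-conull, there is at least one point $p\in A$ at which $\frak{n}$ is continuous.

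On the other hand, the hypotheses of \refcor{faithfulCase} are precisely those of \reflem{discontinuity}: $\rho$ is faithful with non-discrete image and $\rho(\Gamma)$ has at most $N$ fixed points. Applying \reflem{discontinuity} with $\varphi=\frak{n}|_A$ (the two bulleted conditions there are the $\rho(\Gamma)$-invariance of $A$ and the equivariance of $\frak{n}$, both of which hold) yields that $\frak{n}$ is discontinuous at \emph{every} point of $A$. This contradicts the existence of the continuity point $p\in A$ found above. Hence no ergodic harmonic measure on $(M_\rho,\cF_\rho,g)$ can be of Type I, so every such measure is of Type II.

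\begin{proof}
Suppose toward a contradiction that some ergodic harmonic measure $m$ on $(M_\rho,\cF_\rho,g)$ is not of Type II. Since the suspension of a circle action of a closed hyperbolic manifold group is a compact hyperbolic $C^2$ foliation and the universal cover of $\nu$-a.e.\ leaf is $\DD^n$, \refthm{MatsumotoDichotomy} implies that $m$ is of Type I. By \refconst{MatsumotoMap} there is an associated Matsumoto map $\frak{m}:S^1\to S^{n-1}_\infty$ together with a domain of equivariance $A\subset S^1$: a $\nu$-conull, $\rho(\Gamma)$-invariant set on which $\frak{m}(\rho(\gamma)(z))=\gamma(\frak{m}(z))$ for all $\gamma\in\Gamma$ and all $z\in A$. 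By \refcor{measurability}, $\frak{m}$ is $\nu$-measurable, so \refcor{continuity} shows that $\frak{m}$ is continuous at $\nu$-a.e.\ point of $S^1$; since $A$ is $\nu$-conull, we may pick a point $p\in A$ at which $\frak{m}$ is continuous. On the other hand, the hypotheses here are exactly those of \reflem{discontinuity}, which applied to $\varphi=\frak{m}|_A$ shows that $\frak{m}$ is discontinuous at every point of $A$, in particular at $p$. This contradiction shows that $m$ must be of Type II, as desired.
\end{proof}
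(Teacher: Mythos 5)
Your proposal is correct and follows essentially the same route as the paper: assume Type I (via the dichotomy), obtain the Matsumoto map with its domain of equivariance from \refconst{MatsumotoMap}, invoke \refcor{measurability} and \refcor{continuity} to get $\nu$-a.e.\ continuity, and contradict this with the everywhere-discontinuity statement of \reflem{discontinuity}. The only difference is that you spell out a few steps (choice of a continuity point in $A$) that the paper leaves implicit.
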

\begin{proof}
    Let $m$  an ergodic harmonic measure  on $(M_\rho,\cF_\rho,g)$. 
    Assume that $m$ is of Type I. 
    By \refconst{MatsumotoMap}, we have the Matsumoto map $\frak{m}:S^1 \to S_\infty^{n-1}$ which is $\Gamma$-equivariant on some domain of equivariance and $\nu$-measurable (\refcor{measurability}).
    By \refcor{continuity}, $\frak{m}$ is continuous $\nu$-a.e., but this contradicts \reflem{discontinuity}.
    Thus, $m$ is of Type II.
\end{proof}

Then, we  prove \refthm{indiscreteType2} by proving the case of non-faithful actions.

\begin{restate}{Theorem}{Thm:indiscreteType2}
Let $\Gamma$ be a discrete subgroup of  $\Isomp(\DD^n)$ such that $\DD^n/\Gamma$ is a closed hyperbolic manifold, and $\rho:\Gamma\to \Homeop(S^1)$ an action of $\Gamma$ on $S^1$.
Assume that $\rho$ satisfies one of the following conditions:
\begin{itemize}
    \item $\rho$ is non-faithful or
    \item $\rho$ is a faithful action such that $\rho(\Gamma)$ is a non-discrete subgroup in $\Homeop(S^1)$, having at most $N$ fixed points
\end{itemize}
Then, any ergodic harmonic measure on the suspension $(M_\rho,\cF_\rho,g)$ of $\rho$ is of Type II.
\end{restate}
\begin{proof}
    By \refcor{faithfulCase}, it suffices to show the case where $\rho$ is non-faithful. 
    To prove this case, we just follow the argument of \cite[Proposition~6.6.]{Matsumoto12}.

    Assume that $\rho$ is non-faithful and $m$ is an ergodic harmonic measure on the compact hyperbolic $C^2$ foliation $(M_\rho,\cF_\rho,g)$ given by the suspension of $\rho$.   
    If $m$ is completely invariant, that is, its characteristic harmonic function is constant on an $m$-a.e. leaf of $\cF_\rho$, then it is easy to see from the construction of the associated measure $[\mu_L]$ of $m$ that $m$ is of Type II.
    
    Then, we consider the case where $m$ is not completely invariant. 
    Assume for contradiction that $m$ is of Type I. Then, by \refconst{MatsumotoMap}, we have the Matsumoto map $\frak{m}:S^1 \to S_\infty^{n-1}$,  which is $\Gamma$-equivariant on a domain $A$ of equivariance and $\nu$-measurable (\refcor{measurability}). 
    Say that $\frak{m}(S^1\setminus A)=p$ for some $p\in S_\infty^{n-1}$.
    Here, $\nu$ is given by \refthm{localStructure}.
    Since $\nu$ is quasi-invariance (\refthm{wellDefine}),  
    the pushforward measure $\frak{m}_*\nu$ is also quasi-invariant under the $\Gamma$-action on the ideal boundary $S_\infty^{n-1}$. 
    Note that the support of $\nu$, $\supp(\nu)$, is an infinite set. 
    
    Choose a non-trivial element $\gamma \in \ker(\rho)$. Then, we can take a Borel fundamental domain $F$ for the $\langle \gamma \rangle$-action on $S_\infty^{n-1}\setminus \Fix(\gamma)$ since $\gamma$ is loxodromic and has exactly two fixed points: one attracting and one repelling.
    Also, we may assume that $p\notin F$.
    Then, we have $\nu(\frak{m}^{-1}(F))>0$. 
    
    On the other hand, since $\gamma\in \ker(\rho)$,  we have  
    \[
    \frak{m}^{-1}( F)=\rho(\gamma)\cdot\frak{m}^{-1}(F). 
    \]
    If $p\notin \gamma \cdot F$, then 
    \[\rho(\gamma)\cdot\frak{m}^{-1}(F)=\frak{m}^{-1}(\gamma \cdot F).
    \]
    Otherwise,  $p\in \gamma \cdot F$ and 
    \[
     (S^1\setminus A)\cup [\rho(\gamma)\cdot\frak{m}^{-1}(F)]=\frak{m}^{-1}(\gamma \cdot F) 
    \]
    Therefore, 
    we have 
    \[
    \frak{m}^{-1}(F) \cap \frak{m}^{-1}(\gamma \cdot F) =\frak{m}^{-1}(F).
    \]
    Then, 
    \[\nu(\emptyset)=\nu(\frak{m}^{-1}(F\cap \gamma \cdot  F))=\nu(\frak{m}^{-1}(\gamma \cdot F)\cap \frak{m}^{-1}(F))=\nu(\frak{m}^{-1}(F))>0.
    \]
    This is a contradiction. Thus, $m$ is of Type II.   
\end{proof}

For an immediate application, we note \cite[Example~2.112.]{Calegari07}.
\begin{ex}\label{Exa:GaloisConj}
Suppose that $M$ is a closed oriented hyperbolic 3-manifold, that is, $M = \Bbb H^3/\Gamma$ for some $\Gamma<\PSL(\Bbb C)$, which is a discrete subgroup isomorphic to $\pi_1(M)$. Mostow rigidity implies that $\Gamma$ is conjugate into $\PSL(K)$ for some number field $K$. 

Suppose that $K$ admits a real place so that there is a Galois embedding $\sigma:K\to\Bbb R$. Then $\sigma$ induces a faithful representation $\rho_{\sigma}:\Gamma\to\PSL(\RR)<\Homeop(S^1)$ by conjugating the representation $\Gamma\to \PSL(K)$. 
Note that $\rho_{\sigma}(\Gamma)$ is a non-discrete subgroup of $\PSL(\RR)$ by the $K(G,1)$-space property of $\Gamma$.
Since $\PSL(\RR)$ have at most $2$ fixed points,
by \refthm{indiscreteType2}, any ergodic harmonic measure on the compact hyperbolic $C^2$ foliation $(M_{\rho_\sigma},\rho_\sigma,g )$ is of Type II.
\end{ex}

\section{Application: Matsumoto dichotomy on foliated $S^1$-bundles over closed hyperbolic surfaces}

In this section, we complete the classification of the ergodic harmonic measures on the foliated circle bundles over closed hyperbolic surfaces in terms of the Matsumoto dichotomy.
To this end, we prove \refcor{classificationOverSurface}.

\begin{restate}{Corollary}{Cor:classificationOverSurface}
    Let $\Gamma$ be a subgroup of $\PSL(\RR)$ such that $S=\HH^2/\Gamma$ is a closed hyperbolic surface and let $(M_\rho,\cF_\rho, g)$ the compact hyperbolic $C^2$-foliation given by the suspension of an action $\rho:\Gamma\to \PSL(\RR)$.
    Then, the following holds.
    \begin{itemize}
         \item If $\rho$ is discrete and faithful, then there is a unique ergodic harmonic measure on $(M_\rho,\cF_\rho, g)$ that is of Type~I ;
        \item If $\rho$ is either with non-discrete image (equivalently, dense image) or non-faithful, then any ergodic harmonic measure on $(M_\rho,\cF_\rho, g)$ is of  Type~II.        
    \end{itemize}
\end{restate}
\begin{proof}
    Since $\PSL(\RR)$ has at most $2$ fixed points, the second statement is an immediate consequence of \refthm{indiscreteType2}.
    When $\rho$ is discrete and faithful, $(M_\rho, \cF_\rho, g)$ is the Anosov foliation on the unit tangent bundle of $S$. 
    Recall that the result of Matsumoto \cite[Theorem~5.7. and Example~5.8]{Matsumoto12}(or \cite[Theorem 3.9]{DK}, \cite[Proposition 5, p. 305]{Garnett}) implies that there is a unique harmonic measure on $(M_\rho, \cF_\rho, g)$.
    By the uniqueness, it is automatically ergodic.
    In this case, the unique harmonic measure is of Type I by \cite[Example~6.5.]{Matsumoto12}. 
\end{proof}

Unlike in \refcor{classificationOverSurface}, \refthm{indiscreteType2} does not cover the case of discrete and faithful actions.
Hence,  it is natural to ask the following question.
\begin{ques}\label{Que:DFTypeI}
    Let $\rho$ be a discrete and faithful action of a closed hyperbolic manifold group on $S^1$. 
    Is any ergodic harmonic measure on the suspension foliation of $\rho$ Type I?
\end{ques}

In fact, such actions naturally arise in the context of geometric topology.
See the following example.
\begin{ex}\label{Exa:fibering}
Recall from \refsec{intro} the action $\rho_{univ}:\pi_1(M_\varphi)\to \Homeop(S^1)$ of the fundamental group of the hyperbolic mapping torus on the ideal boundary of the universal cover of the fiber surface.
In fact, $\rho_{univ}$ is a discrete faithful action with at most $N$ fixed points. 
To see this, first note that the action $\rho_{univ}$ is the extension of the standard action $\Gamma\to \PSL(\RR)$ where $\HH^2/\Gamma$ is the fiber surface.
It is well known that any lifting $\phi$ of $\varphi$ induces an element in $\Homeop(S^1)$ with a finite orbit; see, e.g., \cite{HandelThurston}.
More precisely, some power of $\phi$ has an even number of fixed points.
Half of the fixed points are attracting, and the other half are repelling and there is some lifting $\phi$, some power of which admits at least $4$ fixed points.
In particular, we can see that $\rho_{univ}(\pi_1(M_\varphi))$ has at most $N$ fixed points and each non-trivial element of $\rho_{univ}(\pi_1(M_\varphi))$ admits a finite orbit; see, e.g., \cite{HandelThurston},\cite{CassonBleiler}.

On the other hand, $G=\rho_{univ}(\pi_1(M_\varphi))$ is discrete in $\Homeop(S^1)$. If not, by \refthm{BCT}, $fGf^{-1}$  would be a dense subgroup of $\PSL^{(m)}(\RR)$ for some $m\in\NN$ and some $f\in \Homeop(S^1)$.
Observe that $\PSL^{(k)}(\RR)$ contains an element that fixes exactly two points: one repelling and one attracting if and only if $k=1$.
Since $G$ contains $\Gamma$, $m=1$ and $fGf^{-1}\leq \PSL(\RR)$. However, it is a contradiction since there is a non-trivial element in $fGf^{-1}$ having at least $4$ fixed points.
This implies the discreteness of $\rho_{univ}(\pi_1(M_\varphi))$.
\end{ex}

\section{Application: some remarks on Amenable foliations}
In this section, we briefly discuss the following question,  posed by Matsumoto \cite[Question~6.7.]{Matsumoto12}
\begin{ques}\label{Que:amenable}
    It is known \cite{Kai} that a compact hyperbolic foliation with a type I ergodic harmonic measure is an amenable measured foliation in the sense of \cite[Section~3.3.]{AR}. Is the converse true?
\end{ques}
We may ask the above question for the suspension of an action satisfying some proper regularity. 
For instance, let $\rho:\pi_1(S)\to \PSL(\RR)$ an action of the fundamental group of a closed hyperbolic surface $S$ on the circle. Say that $(M_\rho,\cF_\rho,g)$ is the compact hyperbolic $C^2$-foliation given by the suspension of $\rho$.
If $(M_\rho,\cF_\rho,g)$ is amenable in the sense of \cite[Section~3.3.]{AR}, then we can easily see that $\rho$ should be faithful, i.e. $\ker(\rho)$ is trivial (\cite[Corollary~5.3.33.]{AR}).
By using \cite[Theorem~4.8.]{Moore}, which is a generalization of the result of \cite{GC}, we can also conclude that $\rho$ is discrete. 
Therefore, by the first statement in \refcor{classificationOverSurface}, we can see that \refque{amenable} is true for suspensions of $\PSL(\RR)$-representations of closed hyperbolic surface groups.

Motivated by \refthm{indiscreteType2},  and \refque{DFTypeI}, we may consider \refque{amenable} for actions of closed hyperbolic $3$-manifold groups on the circle with at most $N$ fixed points. 

\begin{ques}
   Does \refque{amenable} hold for the suspension foliation given by a discrete faithful action of a closed hyperbolic $3$-manifold group on the circle with at most $N$ fixed points?
\end{ques}
Alternatively, we may ask the following.
\begin{ques}
    Is there a discrete faithful action of a closed hyperbolic $3$-manifold group on $S^1$, the suspension of which is amenable in the sense of \cite[Section~3.3.]{AR} and admits a type II ergodic harmonic measure?
\end{ques}
Such an example will provide a counter-example of \refque{DFTypeI} and \refque{amenable}. 

\section*{Acknowledgement}
We would like to thank Yoshifumi Matsuda, Yoshihiko Mitsumatsu, Hiraku Nozawa and Michele Triestino for helpful conversations and comments.
The first author was supported by the National Research Foundation of Korea(NRF) grant funded by the Korea government(MSIT) (RS-2022-NR072395).
The second author was partially supported by the National Research Foundation of Korea (NRF) grant funded by the Korea government (MSIT) (No. 2020R1C1C1A01006912) and by Mid-Career Researcher Program (RS-2023-00278510) through the National Research Foundation funded by the government of Korea.

\bibliographystyle{alpha}
\bibliography{biblio.bib}

\end{document}